\newtheorem{theorem}{Theorem}[section]
\newtheorem{proposition}[theorem]{Proposition}
\newtheorem{lemma}[theorem]{Lemma}
\numberwithin{equation}{section}
\renewcommand{\H}{\mathbb{H}}
\newcommand{\C}{\mathbb{C}}
\newcommand{\SA}{\mathcal{A}}
\newcommand{\SC}{\mathcal{C}}
\newcommand{\SD}{\mathcal{D}}
\newcommand{\SE}{\mathcal{E}}
\newcommand{\SL}{\mathcal{L}}
\newcommand{\SM}{\mathcal{M}}
\newcommand{\SO}{\mathcal{O}}
\newcommand{\SP}{\mathcal{P}}
\newcommand{\ST}{\mathcal{T}}
\newcommand{\cS}{\mathcal{S}}
\newcommand{\PP}{\SP\SP}
\newcommand{\eps}{\epsilon}
\begin{document}

\baselineskip=15pt

\title[Isomonodromic deformations of irregular connections and
stability]{Isomonodromic deformations of irregular connections and
stability of bundles}

\author[I. Biswas]{Indranil Biswas}

\address{School of Mathematics, Tata Institute of Fundamental
Research, Homi Bhabha Road, Bombay 400005, India}

\email{indranil@math.tifr.res.in}

\author[V. Heu]{Viktoria Heu}

\address{Institut de Recherche Math\'ematique Avanc\'ee, 7 rue 
Ren\'e-Descartes, 67084 Strasbourg Cedex, France}

\email{heu@math.unistra.fr}

\author[J. Hurtubise]{Jacques Hurtubise}

\address{Department of Mathematics, McGill University, Burnside
Hall, 805 Sherbrooke St. W., Montreal, Que. H3A 0B9, Canada}

\email{jacques.hurtubise@mcgill.ca}

\subjclass[2010]{14H60, 53B15}

\keywords{Irregular singularity, connection, isomonodromic deformation, stability,
principal bundle.}

\date{}

\maketitle

\begin{abstract} Let $G$ be a reductive affine algebraic group defined over
$\mathbb C$, and let $\nabla_0$ be  a meromorphic $G$-connection  on a holomorphic $G$-bundle $E_0$, over a smooth complex curve $X_0$, with polar locus $P_0 \subset X_0$.  We assume that $\nabla_0$ is irreducible in the sense that it does not factor through some proper parabolic subgroup of $G$. We consider the universal isomonodromic deformation $(E_t\to X_t, \nabla_t, P_t)_{t\in \mathcal{T}}$ of $(E_0\to X_0, \nabla_0, P_0)$, where $\mathcal{T}$ is a certain quotient of a certain framed Teichm\"uller space we describe. We show that if the genus $g$ of $X_0$ satisfies $g\geq 2$, then for a general parameter $t\in \mathcal{T}$, the $G$-bundle $E_t\to X_t$ is stable.  For $g\geq 1$, we are able to show that for a general parameter $t\in \mathcal{T}$, the $G$-bundle $E_t\to X_t$ is semistable. 
\end{abstract}

\section{Introduction}

The natural correspondence between a flat connection on a principal bundle defined over a variety 
and its monodromy representation is a recurrent theme in mathematics, with a long 
history, as evidenced by the name, Riemann--Hilbert problem, given to one of the core 
questions of the subject. This basic problem consists in asking when a representation 
of the fundamental group of a punctured Riemann sphere can be realized by a flat 
connection on a holomorphically trivial bundle, with simple poles at the punctures; the 
answer, which is most of the time, but not always (\cite{Plemelj}, \cite{Dekkers}, 
\cite{Bolibruch1}, \cite{Bolibruch2}, \cite{Kostov}), is in itself an interesting 
chapter of the history of mathematics.

If one relaxes the condition of triviality, and asks whether the representation can be realized on a principal bundle, then the answer is always yes, and indeed the correspondence is quite natural. The question then becomes that of whether the bundle can be made trivial, either by some twists at the punctures (Schlesinger transformations) or by deforming the location of the punctures (isomonodromic deformations). The deformation theoretic version of the Riemann--Hilbert problem becomes:

\emph{Given a logarithmic connection 
$(E_0\, ,\nabla_0)$
on $\mathbb{P}^1_{\mathbb C}$ with polar divisor $D_0$ of degree
$n$, is there a point $t$ of the Teichm\"uller space $\mathrm{Teich}_{0,n}$ such that
the underlying holomorphic vector bundle $E_t\,=\,\mathcal{E}\vert_{\mathbb{P}^1_{\mathbb C}
\times\{t\}}$ in the universal isomonodromic deformation
$(\mathcal{E}\, , \nabla)$ for $(E_0\, ,\nabla_0)$ is trivial?}

A partial answer to this question is given, in the case of vector bundles of rank two, 
by the following theorem of Bolibruch:

\begin{theorem}[{\cite{Bolibruch3}}]\label{thmBolibruch}
Let $(E_0\, ,\nabla_0)$ be an irreducible trace--free logarithmic rank two connection
with $n\,\geq\, 4$ poles on $\mathbb{P}^1_{\mathbb C}$ such that each singularity is
resonant. There is a proper closed complex analytic subset $\mathcal{Y}\,\subset\,
\mathrm{Teich}_{0,n}$ such that for all $t\,\in\, \mathrm{Teich}_{0,n}\setminus
\mathcal{Y}$, the holomorphic vector bundle $E_t\,=\,\mathcal{E}\vert_{\mathbb{P}^1_{
\mathbb C}\times\{t\}}$ underlying the universal isomonodromic deformation
$(\mathcal{E}\, , \nabla)$ of $(E_0,\nabla_0)$ is trivial. 
\end{theorem}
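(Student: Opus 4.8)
The plan is to use that on $\mathbb{P}^1$ every holomorphic vector bundle splits as a direct sum of line bundles (Grothendieck), so that the trace--free hypothesis, which makes $\det E_t$ trivial, forces $E_t\cong \mathcal{O}(k_t)\oplus\mathcal{O}(-k_t)$ for a unique integer $k_t\geq 0$; thus $E_t$ is trivial precisely when $k_t=0$. Writing $\mathcal{E}$ for the bundle of the universal family over $\mathbb{P}^1\times\mathrm{Teich}_{0,n}$, $p_1$ and $\pi$ for the two projections, I would first record the elementary observation that $E_t$ is trivial if and only if $H^0(\mathbb{P}^1,\, E_t\otimes\mathcal{O}(-1))=0$, since $\mathcal{O}(k-1)\oplus\mathcal{O}(-k-1)$ carries a nonzero section exactly when $k\geq 1$. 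By the semicontinuity theorem applied to the direct image $\pi_*\bigl(\mathcal{E}\otimes p_1^*\mathcal{O}(-1)\bigr)$, the non--triviality locus
\[
\mathcal{Y}=\bigl\{\, t\in\mathrm{Teich}_{0,n}\ :\ H^0(\mathbb{P}^1,\, E_t\otimes\mathcal{O}(-1))\neq 0 \,\bigr\}
\]
is closed and complex analytic. Consequently the theorem reduces to the single assertion that $\mathcal{Y}$ is proper, equivalently that $E_t$ is trivial for at least one parameter: as $\mathrm{Teich}_{0,n}$ is connected, once some $t_0\notin\mathcal{Y}$ exists, the complement of $\mathcal{Y}$ is open and dense, and on it the bundle is trivial.

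Next I would use irreducibility to keep the splitting type bounded. If $k_t\geq 1$, the maximal--degree sub--line--bundle $L_t\cong\mathcal{O}(k_t)\subset E_t$ is canonically determined, and since $\nabla_t$ is irreducible it cannot preserve $L_t$. Hence the second fundamental form, the $\mathcal{O}$--linear map obtained by projecting $\nabla_t|_{L_t}$ to the quotient,
\[
\bar\beta_t\in\mathrm{Hom}\bigl(L_t,\ (E_t/L_t)\otimes K_{\mathbb{P}^1}(D_t)\bigr)=H^0\bigl(\mathbb{P}^1,\ \mathcal{O}(n-2-2k_t)\bigr),
\]
is nonzero, which forces $n-2-2k_t\geq 0$. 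Thus $0\leq k_t\leq (n-2)/2$ for every $t$, so the splitting type ranges over a finite set and, by semicontinuity, its generic value $k$ is the minimum attained.

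The crux, and the step I expect to be by far the hardest, is to show that this generic value is $0$, equivalently to realize the fixed monodromy of $(E_0,\nabla_0)$ on a holomorphically trivial bundle for some configuration of poles. This is exactly where resonance is indispensable: at a resonant pole the local monodromy does not rigidly determine the germ of the logarithmic lattice, so one has a genuine family of logarithmic extensions differing by elementary (Schlesinger) transformations, supplying extra local moduli at each of the $n\geq 4$ points. The strategy is to combine this local freedom with the $(n-3)$--dimensional freedom of moving the poles in $\mathrm{Teich}_{0,n}$. Assuming for contradiction that $k_t\equiv k\geq 1$ on a dense open set $U$, the subbundle $L_t$ and its second fundamental form vary holomorphically, and one differentiates the holomorphic type of $E_t$ along the isomonodromy (Ehresmann) connection; the bundle can become less unstable precisely when the resulting Kodaira--Spencer class in $H^1(\mathbb{P}^1,\,\mathrm{End}(E_t))$ projects nontrivially to $H^1(\mathbb{P}^1,\,\mathrm{Hom}(L_t,E_t/L_t))=H^1(\mathbb{P}^1,\,\mathcal{O}(-2k_t))$, which is nonzero for $k_t\geq 1$. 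The resonance--induced lattice modifications are what guarantee a deformation with nonvanishing such projection, forcing $k_t$ to drop and contradicting $k_t\geq 1$ throughout $U$. Making this obstruction computation precise — tracking how the motion of the poles and the choice of logarithmic lattices feed into the variation of the splitting type, and verifying the non--degeneracy that $n\geq 4$ together with resonance provides — is the technical heart of the argument and the principal obstacle; it is here that Bolibruch's analysis does its real work.
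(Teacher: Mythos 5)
First, a point of orientation: the paper does not prove this statement at all --- it is quoted as background from Bolibruch \cite{Bolibruch3}, with the remark that \cite{Viktoria1} later removed the resonance hypothesis --- so the only meaningful benchmarks are Bolibruch's argument and the obstruction machinery the paper itself develops in Sections 4--5 for the higher-genus analogues. The first half of your proposal is sound and standard: Grothendieck splitting together with $\det E_t\cong \mathcal{O}_{\mathbb{P}^1}$ gives $E_t\cong\mathcal{O}(k_t)\oplus\mathcal{O}(-k_t)$; semicontinuity of $h^0(E_t\otimes\mathcal{O}(-1))$ makes the non-triviality locus $\mathcal{Y}$ closed and analytic; and irreducibility forces the second fundamental form of the (unique, for $k_t\geq 1$) maximal subbundle to be a nonzero section of $\mathcal{O}(n-2-2k_t)$, bounding $k_t\leq (n-2)/2$. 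This last computation is exactly the genus-zero incarnation of the paper's hypothesis $S(\nabla)\neq 0$ and its line bundle $\mathcal{L}$ from \eqref{l}.

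The genuine gap is the decisive step, which you explicitly defer to ``Bolibruch's analysis'': nothing in your text establishes that the generic $k_t$ is $0$, i.e.\ that $\mathcal{Y}$ is proper. Moreover, the mechanism you propose cannot work as stated. Resonance-induced lattice modifications (Schlesinger transformations) are \emph{not} deformation parameters of the universal isomonodromic deformation over $\mathrm{Teich}_{0,n}$: the logarithmic lattices are fixed along the family, and changing them produces a different connection and a different family, not another point $t$, so the ``extra local moduli'' you invoke are simply unavailable in the parameter space of the theorem. The correct completion --- which needs no resonance at all --- is the paper's own obstruction calculus specialized to $g=0$ (here $D=D_0$, so $\mathbb{H}^1_\parallel$ reduces to $\mathrm{H}^1(\mathbb{P}^1,\mathrm{T}_{\mathbb{P}^1}(-D_0))=T_t\,\mathrm{Teich}_{0,n}$): the nonzero sheaf map $\mathrm{T}_{\mathbb{P}^1}(-D_0)\to\mathcal{L}\subset\mathcal{O}(-2k_t)$ has torsion cokernel, so the induced map on $\mathrm{H}^1$ is surjective (as in Lemma~\ref{surj1} and Proposition~\ref{surj2}), while $h^1(\mathbb{P}^1,\mathcal{L})\geq 2k_t-1\geq 1$; since the Harder--Narasimhan reduction extends along each stratum of the Shatz/Nitsure stratification (Lemma~\ref{lemNitsure}), tangent vectors to a stratum with $k\geq 1$ lie in the kernel of the obstruction map $\omega_{\mathcal{L}}$ of Proposition~\ref{prop1}, whence each unstable stratum has positive codimension, and the finitely many of them ($1\leq k\leq (n-2)/2$) union to a proper closed analytic subset. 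This is precisely the strengthening in \cite{Viktoria1}; your framing of resonance as ``indispensable'' to the route you chose is therefore mistaken --- resonance plays a role only in Bolibruch's original, genuinely different, argument.
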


In \cite{Viktoria1}, it is shown that the resonance condition in Theorem 
\ref{thmBolibruch} is redundant.

This gives an indication for the Riemann sphere; one can actually consider a similar 
problem for an arbitrary Riemann surface. Indeed, triviality of a vector bundle over the 
Riemann sphere is equivalent to being semi-stable of degree zero. On a general Riemann 
surface, the question of whether one can realize a representation by a semi-stable vector bundle of
degree zero was considered in \cite{Helene1,Helene2}. The deformation 
version, whether a logarithmic connection on a bundle over an arbitrary Riemann surface 
admits an isomonodromic deformation to a logarithmic connection on a stable or semi-stable bundle, was 
treated in \cite{BHH}; see also \cite{Viktoria1} for rank two. We recall from \cite{BHH},
\cite{Viktoria1}:

\begin{theorem}[{\cite{BHH}, \cite{Viktoria1}}]\label{R-O}
Let $X$ be a compact connected Riemann surface of genus $g$, and let $D_0\,
\subset\, X $ be an ordered subset of it of cardinality $n$. Let $G$ be a
reductive affine algebraic group defined over
$\mathbb C$. Let $E_G$ be a holomorphic principal $G$--bundle on $X$ and
$\nabla$ a logarithmic connection on $E_G$ with polar divisor $D_0$. Let
$(\mathcal{E}_G\, , \nabla)$ be the universal isomonodromic deformation
of $(E_G\, ,\nabla_0)$ over the universal Teichm\"uller curve
$\tau\,:\, (\mathcal{X}\, , \mathcal{D})\,\longrightarrow\, \mathrm{Teich}_{g,n}$.
For any point $t\, \in\, \mathrm{Teich}_{g,n}$, the restriction
$\mathcal{E}_G\vert_{\tau^{-1}(t)}\,\longrightarrow\, {\mathcal X}_t\,:=\,
\tau^{-1}(t)$ will be denoted by $\mathcal{E}^t_G$.

\begin{enumerate}
\item Assume that $g\, \geq\, 2$ and $n\,=\, 0$. Then there is a
closed complex analytic subset $\mathcal{Y}\, \subset\,
\mathrm{Teich}_{g,n}$ of codimension at least $g$ such that for any $t\,\in\,
\mathrm{Teich}_{g,n} \setminus \mathcal{Y}$, the holomorphic principal $G$--bundle
$\mathcal{E}^t_G\,\longrightarrow\, {\mathcal X}_t$ is semistable.

\item Assume that $g\,\geq\, 1$, and if $g\,=\, 1$, then $n\, >\, 0$. Also, assume
that the initial monodromy representation for $\nabla $ at $t=0$ does not factor through some proper
parabolic subgroup of $G$. Then there is a
closed complex analytic subset $\mathcal{Y}'\, \subset\,
\mathrm{Teich}_{g,n}$ of codimension at least $g$ such that for any $t\,\in\,
\mathrm{Teich}_{g,n} \setminus \mathcal{Y}'$, the holomorphic principal $G$--bundle
$\mathcal{E}^t_G$ is semistable.

\item Assume that $g\,\geq\, 2$. Assume that the monodromy representation for
$\nabla $ at $t=0$ does not factor through some proper parabolic subgroup of $G$. Then there
is a closed complex analytic subset $\mathcal{Y}''\, \subset\,\mathrm{Teich}_{g,n}$ of
codimension at least $g-1$ such that for any $t\,\in\, \mathrm{Teich}_{g,n}\setminus
\mathcal{Y}''$, the holomorphic principal $G$--bundle $\mathcal{E}^t_G$ is stable.
\end{enumerate}
\end{theorem}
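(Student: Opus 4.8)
The plan is to study, on the Teichm\"uller space $\mathrm{Teich}_{g,n}$, the locus where the isomonodromically deformed bundle $\mathcal{E}^t_G$ fails to be (semi)stable, and to bound its codimension. I would begin by recalling the parabolic characterization: $\mathcal{E}^t_G$ is non-semistable precisely when it admits a reduction $\sigma$ of structure group to a proper parabolic $P\,\subsetneq\, G$ for which the degree attached to a strictly anti-dominant character of $P$ is positive, and stability is the same statement with ``$>0$'' strengthened so as to also exclude the degree-zero reductions. Since parabolics fall into finitely many conjugacy classes and, as the next step shows, the degree of any destabilizing reduction is bounded, the relevant reductions are parametrized by finitely many relative Quot/Hilbert schemes over $\mathrm{Teich}_{g,n}$; by properness and semicontinuity their images form closed complex analytic subsets $\mathcal{Y}$, $\mathcal{Y}'$, $\mathcal{Y}''$. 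The whole problem is then to estimate the codimension of these loci.

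The first key input is flatness together with irreducibility. Along each leaf of the isomonodromic foliation the connection $\nabla$ restricted to $\mathcal{X}_t$ is flat with constant monodromy, equal to the initial representation. If a parabolic reduction $\sigma$ were preserved by $\nabla$, the monodromy would factor through $P$, contradicting the hypothesis in (2) and (3) that the initial monodromy is not contained in any proper parabolic; hence no destabilizing reduction is $\nabla$-flat. In case (1), where $n=0$, one does not need this hypothesis: a flat sub-object of a flat bundle inherits a flat connection and so has degree zero, whence it cannot destabilize, while for $n>0$ the residues allow a flat reduction to acquire positive degree, which is exactly why (2) and (3) must assume irreducibility. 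In all cases the second fundamental form
\[
\psi_\sigma \,\in\, H^0\!\left(\mathcal{X}_t,\; \bigl(\mathrm{ad}(\mathcal{E}^t_G)/\mathrm{ad}(\sigma)\bigr)\otimes \Omega^1_{\mathcal{X}_t}(\log D_t)\right)
\]
is a nonzero $\mathcal{O}_{\mathcal{X}_t}$-linear homomorphism. Comparing slopes through $\psi_\sigma$ forces the degree of instability to be at most $2g-2+n$ times a constant depending only on $G$, so only finitely many topological types of destabilizing reduction occur, justifying the finiteness invoked above.

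The heart of the argument, and the step I expect to be the main obstacle, is the codimension estimate, carried out one destabilization type at a time through the nonabelian Gauss--Manin (isomonodromic) connection. Over the sublocus where a reduction $\sigma$ of a fixed type persists one has $\psi_\sigma\,\neq\, 0$; differentiating the condition ``$\sigma$ remains a holomorphic reduction'' along the isomonodromic directions yields, via the Kodaira--Spencer map $T_t\mathrm{Teich}_{g,n}\,\to\, H^1(\mathcal{X}_t,\,T_{\mathcal{X}_t}(-D_t))$ composed with cup product against $\psi_\sigma$, a linear map $\Theta$ from $T_t\mathrm{Teich}_{g,n}$ into the obstruction group governing deformations of the reduction. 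The crucial claim is that, because $\psi_\sigma$ is nonzero and Kodaira--Spencer is surjective, $\Theta$ has rank at least $g$, so the persistence locus of each type has codimension at least $g$; taking the finite union over types preserves the bound and produces $\mathcal{Y}$ and $\mathcal{Y}'$. For stability one must in addition exclude the degree-zero reductions, i.e.\ admissible reductions to a Levi factor, and this costs exactly one dimension, lowering the bound to $g-1$ and yielding $\mathcal{Y}''$; the exclusion of the case $g=1,\,n=0$ likewise originates here, since then $\dim\mathrm{Teich}_{g,n}=3g-3+n=0$ leaves no isomonodromic directions to exploit. The genuine technical delicacy is making the ``rank $\geq g$'' estimate precise and uniform in $t$: one must control how $\sigma$ and $\psi_\sigma$ vary in families and verify that the isomonodromic motion of the holomorphic type is transverse to the locus that preserves a given reduction, which is where the bulk of the work lies.
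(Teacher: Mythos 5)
Your skeleton matches the strategy of \cite{BHH} (echoed in this paper's Section 5): stratify the non-(semi)stable locus, use flatness/irreducibility to make the second fundamental form of the destabilizing reduction nonzero, differentiate the persistence of the reduction along isomonodromic directions, and bound the codimension by the rank of the resulting obstruction map. But the decisive step --- ``$\Theta$ has rank at least $g$'' --- is exactly where your argument has a genuine gap, and the justification you offer (``because $\psi_\sigma$ is nonzero and Kodaira--Spencer is surjective'') is not sufficient. Nonvanishing of $\psi_\sigma$ plus surjectivity of Kodaira--Spencer yields, via the torsion-cokernel argument (cf.\ Lemma \ref{surj1}), a \emph{surjection} of $\mathrm{H}^1(X,\mathrm{T}_X(-D_0))$ onto $\mathrm{H}^1(X,\mathcal{L})$, where $\mathcal{L}\subset E_P(\mathfrak{g}/\mathfrak{p})$ is the line subbundle generated by the image of the second fundamental form --- but it says nothing about the dimension of that target. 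The bound $\mathrm{h}^1(X,\mathcal{L})\geq g$ comes from Riemann--Roch once one knows $\mathrm{degree}(\mathcal{L})<0$, and that negativity is available only because one works with the \emph{Harder--Narasimhan} reduction: $\mu_{\rm max}(E_P(\mathfrak{g}/\mathfrak{p}))<0$ by \cite{AAB} (see \eqref{deg2}), so every line subbundle, in particular $\mathcal{L}$, has negative degree and $\mathrm{h}^0=0$. Your parametrization by Quot/Hilbert schemes of \emph{arbitrary} destabilizing reductions of bounded type loses this: for such a reduction one only controls $\mathrm{degree}(E_P(\mathfrak{g}/\mathfrak{p}))$, and the particular line bundle $\mathcal{L}$ could perfectly well have $\mathrm{h}^1=0$, in which case your $\Theta$ detects nothing. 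This is precisely why the paper stratifies by Harder--Narasimhan type via the schematic HN stratification (Lemma \ref{lemNitsure}), which moreover supplies the canonical relative extension of the reduction along each stratum --- the object one actually differentiates.

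A second, related gap: your $\Theta$ takes values in the full obstruction group $\mathrm{H}^1(X,E_P(\mathfrak{g}/\mathfrak{p}))$, and the map $\mathrm{H}^1(X,\mathcal{L})\to \mathrm{H}^1(X,E_P(\mathfrak{g}/\mathfrak{p}))$ need not be injective, so even granting $\mathrm{h}^1(X,\mathcal{L})\geq g$ the rank of $\Theta$ as you define it could drop below $g$. The paper avoids this by a refined obstruction (Proposition \ref{prop1}, i.e.\ Proposition 4.3 of \cite{BHH}): if the reduction extends to first order, then the class $\omega_{\mathcal{L}}(\gamma_{\mathcal{M}})\in \mathrm{H}^1(X,\mathcal{L})$ \emph{itself} vanishes, so the codimension count takes place in $\mathrm{H}^1(X,\mathcal{L})$ directly, where surjectivity and the dimension bound are both in hand. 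For the record, two pieces of your outline are consistent with the paper's route: your treatment of case (1) (a $\nabla$-flat reduction has degree zero, so the HN reduction cannot be flat when $n=0$, making irreducibility unnecessary there) is the standard argument; and the drop from $g$ to $g-1$ in the stable case is correct in outcome, though its mechanism in the paper is simply the weaker degree bound $\mathrm{degree}(\mathcal{L})\leq 0$ giving $\mathrm{h}^1(X,\mathcal{L})\geq g-1$, not a one-dimensional count of Levi-type reductions.
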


Our aim here is to extend this result to connections with irregular singularities, that 
is connections with higher order poles. Let us consider a triple $$(E_G \longrightarrow 
X\, , D\, ,\nabla), $$ where $E_G$ is a holomorphic principal $G$--bundle over a 
compact connected Riemann surface $X$, and $\nabla$ is an integrable holomorphic 
connection on $E_G$, with possibly irregular singularities bounded by a divisor $D$ on 
$X$; that is, if $\nabla$ has poles of order $n_i$ at points $p_i$ of $X$, we set $D\,=\, 
\sum_{i=1}^m n_ip_i$, and let $D_0\,= \,\sum_{i=1}^m p_i$ denote the reduced divisor. We 
will suppose that the leading order term (i.e., coefficient of $z^{-n_i}$) of the connection
at $p_i$ 
is conjugate to a regular semisimple element $h_{i,-n_i}$ of a fixed Cartan subalgebra 
$\mathfrak h$ of the Lie algebra $\mathfrak g$ of $G$. By a gauge transformation at the 
poles, the polar part of the connection can be conjugated to
$$ h_i(z) dz\,=\, (h_{i,-n_i}z^{-n_i} + h_{i,-n_i+1}z^{-n_i+1}+\cdots +h_{i,-1}z^{-1}) dz\, .
$$ 
If one allows a formal gauge transformation, then the connection itself can be put in this form at the puncture; the power series that effects this transformation though does not typically converge. Instead, there is additional monodromy data, given by Stokes matrices \cite{JMU}. A good introduction to the theory can be found in \cite{Sa}, and the more advanced results we need have been established in \cite{Boa1,Boa2}. We now give a brief outline of the basic ideas. 

For each irregular singular point, one chooses disks $\Delta_i$ centered at $p_i$, $1\,\leq\,
i\,\leq\, m$. On $\Delta_i$, as noted, one has a formal solution 
$$H_i(z) \,=\, \exp(\int h_i (z) dz)$$
with a monodromy $\mu_i \,=\, \exp( 2\pi\sqrt{-1} h_{i,-1} )$; one partitions
the disk into $2n_i-2$ angular sectors $\cS_{i,j}$ determined by the $ h_{i,-n_i}$. 
Associated to the intersections $\cS_{i,2j}\cap \cS_{i,2j+1}$, there is a fixed (independent of $j$) unipotent radical $U_{+,i}$ of a Borel subgroup associated to $\mathfrak h$; to the intersections $\cS_{i,2j+1}\cap \cS_{i,2j+2}$ one has associated the opposite unipotent radical $U_{-,i}$. We choose a base point $q_i$ in $\cS_{i,1}$.

One can then consider on each sector actual integrals $g_{i,j}(z) \,\in\, G$ of
the connection asymptotic to the $H_i$, and passing from the sector $\cS_{i,2j}$ to
$\cS_{i,2j+1}$, the two solutions are related by Stokes factors $u_{+,i,j}$ lying in
$U_{+,i}$. In passing from $\cS_{i,2j+1}$ to $\cS_{i,2j+2}$, the two solutions are related by Stokes factors $u_{-,i,j}$ lying in $U_{-,i}$. The monodromy of the connection around the singularity is the product 
$$\rho_i \,= \,\mu_i u_{-,i, n-1}\cdots u_{+,i,2} u_{-,i,1}u_{+,i,1}\, .$$
This monodromy and its decomposition into Stokes factors is defined up to the action of a torus.

For the deformations, one has a generalized Teichm\"uller space 
$\mathrm{Teich}_{\mathfrak h, g,m}$, which combines the standard $\mathrm{Teich}_{g,m}$ 
with the irregular polar parts. Note that this combines parameters on the curves with parameters associated to the group; in 
addition to the standard Teichm\"uller parameters of the punctured curve, one considers 
the extra parameters of the ``irregular type'', realized as the formal singularity 
$H_i(z) \,=\, \exp(\int h_i (z) dz)$. More details can be found below.

Lying above this deformation on the base, there is a theory of isomonodromic 
deformations of such connections, generalizing the one we have for the logarithmic 
case. Over the base parameters, in particular $H_i(z) \,=\,\exp(\int h_i (z) dz)$, 
which becomes an Abelian transition function at the puncture, one fixes the Stokes 
factors $u_{\pm,i, j}$ at the irregular singularities, and the representation 
$\pi_1(X\setminus D_0)\,\longrightarrow \,\text{GL}(n,\C)$ of the fundamental group. Fixing such
isomonodromy 
data gives a lift of the Teichm\"uller deformations to a deformation of singular 
connections. By Malgrange's theorem, such isomonodromic deformations exist, and determine the connection up to gauge transformations \cite{Ma} \cite{Viktoria2}. Our aim will be to show:

\begin{theorem}\label{Result} 
Assume
that the monodromy representation for $\nabla_0$ is irreducible in the sense that it does not factor through some proper
parabolic subgroup of $G$. 
\begin{enumerate}
\item If $g\,\geq\, 1$, then there is a
closed complex analytic subset $\mathcal{Y}\, \subset\,
\mathrm{Teich}_{\mathfrak h, g,m}$ of codimension at least $g$ such that for any $t\,\in\,
\mathrm{Teich}_{\mathfrak h, g,m} \setminus \mathcal{Y}$, the holomorphic principal $G$--bundle
$\mathcal{E}^t_G$ is semistable.

\item If $g\,\geq\, 2$, then there
is a closed complex analytic subset $\mathcal{Y}'\, \subset\,\mathrm{Teich}_{\mathfrak h, g,m}$ of
codimension at least $g-1$ such that for any $t\,\in\, \mathrm{Teich}_{\mathfrak h, g,m}\setminus
\mathcal{Y}'$, the holomorphic principal $G$--bundle $\mathcal{E}^t_G$ is stable.
\end{enumerate}
\end{theorem}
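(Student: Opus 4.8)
The plan is to adapt, step by step, the strategy used for logarithmic connections in \cite{BHH}, the new ingredients being the Stokes data and the higher-order polar divisor $D=\sum_{i=1}^m n_i p_i$. Recall first that the reductive $G$-bundle $\mathcal{E}^t_G$ fails to be semistable (respectively, stable) exactly when it admits a reduction $E_{P}\subset\mathcal{E}^t_G$ to a proper parabolic subgroup $P\subset G$ whose associated degree is positive (respectively, non-negative); thus it suffices to bound the codimension of the set of parameters $t$ carrying such a reduction. Suppose this set contained an irreducible analytic subset $Z\subset\mathrm{Teich}_{\mathfrak h,g,m}$. Stratifying, I may assume the conjugacy class of $P$ and the numerical type of the reduction are constant along $Z$; then the reductions $E_{P_t}$ assemble into a holomorphic family over a dense open subset of $Z$, using uniqueness of the canonical Harder--Narasimhan reduction in the unstable case and a further stratification in the strictly semistable case. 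The goal is to prove $\mathrm{codim}\,Z\geq g$ in case (1) and $\mathrm{codim}\,Z\geq g-1$ in case (2).

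The driving object is the second fundamental form. Writing $W_t=\mathrm{ad}(\mathcal{E}^t_G)/\mathrm{ad}(E_{P_t})$ for the normal bundle of the reduction, projection of $\nabla_t$ modulo $\mathrm{ad}(E_{P_t})$ produces an $\SO_{X_t}$-linear form $$\psi_t\in H^0\big(X_t,\,W_t\otimes K_{X_t}(D)\big).$$ If $\psi_t\equiv 0$, then $\nabla_t$ preserves $E_{P_t}$, so the reduction is flat on $X_t\setminus D_0$ and is stabilized by the full generalized monodromy. Here the irregular structure enters for the first time: being stabilized by the monodromy must now incorporate the Stokes factors $u_{\pm,i,j}$ and the formal monodromies $\mu_i$, and one has to check that a $\nabla$-flat parabolic reduction is automatically compatible with the Stokes filtration at each $p_i$, so that its existence would contradict the irreducibility hypothesis on $\nabla_0$. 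Granting this, $\psi_t\neq 0$ for generic $t\in Z$.

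The codimension is controlled by coupling $\psi_t$ to the deformation. Because the family is isomonodromic, the Kodaira--Spencer class of the underlying $G$-bundles projects, modulo $\mathrm{ad}(E_{P_t})$, to the contraction of the connection with the deformation of the base. For $v$ tangent to $Z$ the reduction must persist, so this projection vanishes, which for a Teichm\"uller (curve) direction $v$ reads $$\psi_t\cup KS_X(v)=0\ \in\ H^1\big(X_t,\,W_t\otimes\SO(D-D_0)\big),$$ where the twist by the effective divisor $D-D_0=\sum_{i=1}^m(n_i-1)p_i$ is exactly the new feature relative to \cite{BHH}, in which $D=D_0$ and the target is simply $H^1(X_t,W_t)$. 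Consequently $$\mathrm{codim}\,Z\ \geq\ \mathrm{rank}\Big(T_t\mathrm{Teich}_{\mathfrak h,g,m}\ \longrightarrow\ H^1\big(X_t,\,W_t\otimes\SO(D-D_0)\big)\Big),$$ the map being induced by $KS_X$ followed by $\cup\,\psi_t$ on the curve directions, together with the direct contribution of the irregular-type directions, along which one deforms the polar coefficients $h_{i,-k}$ and which also move the holomorphic bundle.

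It remains to estimate this rank from below by $g$ (respectively $g-1$); this is where the main difficulty lies, and where the two types of deformation directions must be balanced against the twist. Passing to Serre duals turns $\cup\,\psi_t$ into contraction against $\psi_t$, which I would analyze along the rank-one pieces of the Harder--Narasimhan reduction, where contraction with a nonzero section of a line bundle on the integral curve $X_t$ is injective; a Riemann--Roch count on these pieces shows that the curve directions alone contribute rank at least $g-\deg(D-D_0)$ (respectively $g-1-\deg(D-D_0)$), falling short precisely by the polar excess $\deg(D-D_0)=\sum_{i=1}^m(n_i-1)$. The remaining deficit is then supplied by the irregular-type directions: deforming the $h_{i,-k}$ changes the higher-order polar part, and hence the holomorphic structure, in a way transverse to $Z$, restoring the bounds to $g$ and $g-1$. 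The principal obstacles, both absent in the logarithmic theory, are thus (i) verifying that flatness of a parabolic reduction forces compatibility with the Stokes and formal-monodromy data, so that irreducibility genuinely yields $\psi_t\neq 0$; and (ii) the bookkeeping of the twist $D-D_0$, namely showing that the rank lost to the higher-order poles along the curve directions is compensated by the irregular-type deformation directions.
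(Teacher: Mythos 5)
Your overall architecture coincides with the paper's: stratify the base by Harder--Narasimhan type, note that irreducibility forces the second fundamental form to be nonzero, and bound the codimension of each nontrivial stratum by the rank of the map sending deformation directions to the obstruction against extending the parabolic reduction. (Two small remarks: the paper gets your family-of-reductions step for free by citing the schematic Harder--Narasimhan stratification of Gurjar--Nitsure, Lemma \ref{lemNitsure}, rather than by an ad hoc stratification; and your sign convention in the first paragraph is off --- for the Harder--Narasimhan reduction one has $\mathrm{degree}(E_P({\mathfrak g}/{\mathfrak p}))\,<\,0$, so non-semistability corresponds to \emph{negative}, not positive, degree of the normal bundle.)

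The genuine gap is at what you yourself label obstacle (ii). You correctly compute that the curve directions alone contribute rank at least $g-\deg(D-D_0)$ into $\mathrm{H}^1(X_t, W_t(D-D_0))$, but then merely assert that deforming the coefficients $h_{i,-k}$ moves the bundle ``in a way transverse to $Z$, restoring the bounds to $g$ and $g-1$.'' That transversality is exactly the new content of the irregular case, and it cannot be obtained by genericity: the isomonodromic flow is rigid, so one must exhibit the mechanism. The paper's resolution is structural and proceeds in two steps you are missing. First, it saturates the image of $\nabla$ into line bundles ${\mathcal M}\subset \mathrm{At}_{D_0}$ and ${\mathcal L}\subset E_P({\mathfrak g}/{\mathfrak p})$ as in \eqref{l}, and shows (Proposition \ref{prop1}) that the obstruction factors through $\mathrm{H}^1(X,{\mathcal L})$ --- the \emph{untwisted} sheaf, of negative (resp.\ nonpositive) degree, so that $\mathrm{h}^1\geq g$ (resp.\ $g-1$) directly, with no deficit to recover. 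Second, since near each $p_i$ the image of $\nabla$ is a multiple of $h_i(z)$, the line bundle ${\mathcal M}$ sits in the Cartan direction to order $n_i-1$, and only the polar-part deformations \emph{parallel} to $h_i$, i.e.\ the subspace $\H^1_\parallel(X,\SC)$, are relevant. Proposition \ref{surj2} then identifies $\PP_\parallel$ with $\mathrm{H}^0(Q_2)$, where $Q_2={\mathcal M}(D-D_0)/{\mathcal M}$ is torsion of length $\deg(D-D_0)$, and uses the exact sequence $\mathrm{H}^0(Q_2)\to \mathrm{H}^1({\mathcal M})\to \mathrm{H}^1({\mathcal M}(D-D_0))\to 0$ together with Lemma \ref{surj1}: the curve directions surject onto $\mathrm{H}^1({\mathcal M}(D-D_0))$, while the parallel polar deformations, realized at the cocycle level as coboundaries coming from the varied transition functions $H_i(z)(1+\eps\int k_i)$ of Section 3, hit exactly the kernel. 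The sum is therefore surjective onto $\mathrm{H}^1(X,{\mathcal M})$, hence onto $\mathrm{H}^1(X,{\mathcal L})$; your deficit $\deg(D-D_0)$ is filled not by a transversality claim but by this exact cocycle-level decomposition of the isomonodromic Kodaira--Spencer class into (curve cocycle pushed by $\nabla$) plus (coboundary of $k_i$). Without it your step (ii) does not close. Your obstacle (i) is comparatively minor: with irreducibility taken, as in the paper, to mean that the generalized monodromy including Stokes factors does not factor through a proper parabolic, a $\nabla$-flat holomorphic reduction immediately yields such a factorization and contradicts the hypothesis, so $S(\nabla)\neq 0$ as needed.
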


\section{The base space}

We will describe the space $\mathrm{Teich}_{\mathfrak h, g,m}$.

The Teichm\"uller space $\mathrm{Teich}_{ g,m}$ for genus $g$ curves with $m$ marked 
points is a contractible complex manifold of complex dimension $3g-3+m$, 
assuming that $3g-3+m\, >\, 0$. We first build a framed 
Teichm\"uller space. If the singularity divisor is $D\,=\, \sum_{i=1}^m n_ip_i$, we 
can enrich the Teichm\"uller space $\mathrm{Teich}_{ g,m}$ by adding to each point 
$(\Sigma,\, \sum_{i=1}^m p_i)$ of $\mathrm{Teich}_{g,m}$, the additional data of a 
coordinate $z_i$ centered at $p_i$, defined to order $n_i-1$ inclusively, for all
$n_i\,>\,1$. We note that this additional data at $p_i$ is the choice of an
isomorphism of the algebra $m_{p_i}/m^{n_i}_{p_i}$ with $z{\mathbb C}[z]/
z^{n_i}{\mathbb C}[z]$, where $m_{p_i}$ is the ring of holomorphic functions defined
around $p_i$ that vanish at $p_i$. The Teichm\"uller space $\mathrm{Teich}_{ g,m}$
together with the above data produce a framed Teichm\"uller space 
$\mathrm{FTeich}_{g,m,n_1,\cdots ,n_m}$.

Now consider the extra data of the parameters $h_{i,j}$ of the polar parts of the connection. 
We set our framed Teichm\"uller space for deformations of the irregular part of
the connections plus punctured curves to simply be a product:
$$\mathrm{FTeich}_{{\mathfrak h}, g,m,n_1,\cdots ,n_m} \,=\, \mathrm{FTeich}_{g,m,n_1,
\cdots ,n_m}\times \prod_{i=1}^m ({\mathfrak h_0} \times {\mathfrak h }^{n_i-1})\, .$$
 
Our desired space of deformations $\mathrm{Teich}_{\mathfrak h, g,m}$ will be the 
quotient of this space by the groups of germs of diffeomorphisms of neighborhoods 
of $p_i$ which fix $p_i$, acting diagonally on the factors. As the action at each 
puncture is on truncated power series, one need only act by groups of jets $$J_{p_i, 
n_i} = \left\{ z\mapsto a_1z+ a_2z^2+\ldots +a_{n_i}z^{n_i}~\middle|~a_j\in \mathbb{C}, 
~ a_1\neq 0\right\};$$ one has $$ \mathrm{Teich}_{\mathfrak h, g,m} = 
\mathrm{FTeich}_{{\mathfrak h}, g,m,n_1,\cdots ,n_m}/_{\prod_iJ_{p_i,n_i}} .$$ (In fact, 
one would want to go to a universal cover, but for our purposes, this is not 
necessary as we are just considering the local deformations.)

Let us now see what this gives us for infinitesimal deformations. The tangent 
space of $\mathrm{T}_X(-D_0)$ at any element $(X\, , D_0)\, \in\, 
\mathrm{T}_X(-D_0)$ is $$\mathrm{H}^1(X,\, \mathrm{T}_X(-D_0))\, .$$ We note that a 
$1$- cocycle $v$ can be thought of as giving an infinitesimal deformation of the 
coordinate changes from one patch to another; the $1$-coboundaries must be taken 
with values in the vector fields vanishing at $D_0$. Such a coboundary, however, 
affects the form of the irregular polar parts at $p_i$.

Indeed, these are not well defined in themselves, as they are acted on by 
diffeomorphisms of the curve fixing $p_i$. This must be taken into account in 
the deformation theory. Consider an infinitesimal local diffeomorphism of the 
curve given at the puncture $z\,=\,0$ by a vector field $v(z) \partial/\partial 
z$. As we are considering punctured curves, we want $v(0) \,=\, 0$. The changes 
in the function $H_i(z)$ caused by a change in parametrization, infinitesimally 
a vector field, should be considered as trivial: in other words,
$$
H_i(z+\eps v(z)) \,=\, H_i(z) (1+ \eps H_i^{-1}(z)H_i'(z) v(z)) \,=\,
H_i(z) (1+ \eps h_i(z)v(z))\, .
$$

Thus, for our deformations, we will be interested in the complex 
$$\SC\,:\, \mathrm{T}_X(-D_0)\, \buildrel{F}\over{ \longrightarrow}\,\PP\, = \,
\SO_{D-D_0} \otimes \mathfrak h \,=\,\bigoplus_i\mathfrak h^{\oplus n_i-1}\, ;$$
the second sheaf is a sum of skyscraper sheaves supported at the points of $D_0$; the
homomorphism $F$ sends a vector field $v$ around $p_i$ to the
irregular polar part ($\PP$) of the contraction of $v$
with the connection matrix $h_i$: $$F\,:\, v(z) \,\longmapsto\, \PP((h_i(z) v(z))) .$$
The first order deformations of the marked curve are given by the cohomology group \linebreak $\mathrm{H}^1(X,\, \mathrm{T}_X(-D_0) )$; adding in the deformations of the irregular polar parts gives us the global hypercohomology group 
$$\H^1(X,\, \SC) \, = \, \mathrm{T}_{(X,D,H)}\mathrm{Teich}_{\mathfrak h, g,m} \, .$$

We note that $\H^1(X,\, \SC)$ coincides with the space of admissible deformations in \cite{Boa3} 
of the irregular curve defined by the triple $(X,\, D_0,\, \bigoplus_i\mathfrak h^{\oplus 
n_i-1})$. In \cite{Boa3}, the space of objects, consisting of a Riemann surface, some marked 
points on it and irregular types at the marked points, are defined more intrinsically.

We have an exact sequence
$$\bigoplus_i \mathfrak h^{\oplus n_i-1} \, \longrightarrow \, \H^1(X,\,\SC) \, \longrightarrow \,
\mathrm{H}^1(X,\, \mathrm{T}_X(-D_0) ) \, .$$
The elements $\beta$ of $\mathrm{H}^1(X,\, \mathrm{T}_X(-D_0))$ encode extensions 
$$ 0 \, \longrightarrow \, \mathrm{T}_X(-D_0) \, \longrightarrow \, \ST \,\longrightarrow \, \SO_X \, \longrightarrow \, 0 \, .$$
This can be viewed as the tangent bundle to the infinitesimal one-parameter family of bundles represented by the element $\beta$, with the structure sheaf $\SO_X$ representing a trivial normal bundle. An element $\widehat\beta$ of $\H^1(X,\,\SC)$ mapping to $\beta$ encodes a bit more, namely a diagram

$$\begin{xy}\xymatrix{\, \,0\, \, \ar[r]&\, \,\mathrm{T}_X(-D_0)\, \, \ar[r]\ar[d]_{F}&\, \, \ST\, \,\ar[r]\ar[d]& \, \,\SO_X\, \,\ar[r]& 0 \\ &\, \,\PP\, \,\ar@{=}[r]&\, \, \PP\, .}\end{xy}$$

\section{Deforming the bundle}

The Lie algebra of $G$ will be denoted by $\mathfrak g$. Let 
$\text{ad}(E_G)\,=\, E_G\times^G{\mathfrak g}$ be the adjoint bundle for $E_G$ 
over $X$. Let $\text{At}(E_G)$ denote the Atiyah bundle for $E_G$; it fits in 
the Atiyah exact sequence over $X$
$$
0\, \longrightarrow\, \text{ad}(E_G) \, \longrightarrow\,\text{At}(E_G)
\, \longrightarrow\, \mathrm{T}_X \, \longrightarrow\, 0
$$
\cite{At}. The Atiyah bundle for $E_G$ represents over the base the 
$G$--invariant vector fields on the principal $G$--bundle $E_G$; the subbundle 
of invariant vector fields tangent to the fibers is ${\rm ad}(E_G)$. The Atiyah 
exact sequence produces a short exact sequence
$$
0\, \longrightarrow\, \text{ad}(E_G) \, \longrightarrow\,\text{At}_{D_0}\,:=\,
\text{At}_{D_0}(E_G)\, \longrightarrow\,\mathrm{T}_X(-D_0) \, \longrightarrow\, 0\, ,
$$
where $D_0$ is the reduced singular locus of the connection.
In \cite{BHH} it is shown that the deformations of the logarithmic connection, over a
curve $X$ that is also being deformed, are parametrized by
$\mathrm{H}^1(X,\, \text{At}_{D_0})$.

To deal with the higher 
order poles, we need to consider the sheaf $\mathrm{At}_{D_0}( D-D_0)$ of 
meromorphic sections of $\mathrm{At}_{D_0}$ with poles living only in the ${\rm 
ad}(E_G)$ factor, bounded by $D-D_0$:
$$0\, \longrightarrow \, \mathrm{ad}(E_G)(D-D_0)\, \longrightarrow \, \mathrm{At}_{D_0}( D-D_0) \, \longrightarrow \, \mathrm{T}_X(-D_0) \, \longrightarrow \, 0\, .$$

Now let us consider deformations of these. We cover our Riemann surface away from the 
punctures with Stokes sectors $S_{i,j}$, as well as other contractible open sets 
$V_\nu$; choose flat trivializations on these sets, with the ones on Stokes sectors 
being compatible with the formal asymptotics. The transition functions on the bundle 
for these trivializations are then constants, with those between the Stokes sectors 
being the Stokes matrices. For the puncture, we have the transition functions 
$H_i(z)$. Re-label the Stokes sectors as being in the set of $V_\nu$; we then have 
constant transition functions $\Theta_{\nu_1,\nu_2}$ away from the puncture, and 
$H_i(z)$ at the puncture. Now take a variation $H_i(z)(1 + \eps \int k_i(z))$ and a 
cocycle $v_{\nu_1,\nu_2}$ for $\mathrm{T}_X(-D_0)$ , which corresponds to 
infinitesimal displacements of the coordinate patches with respect to each other; 
these together arise from a class $\hat \beta$ in $\H^1(X,\,\SC)$.

We are, in our isomonodromic deformations, deforming the bundle above the curve by 
keeping the same $\Theta_{\nu_1,\nu_2}$, and modifying the transition function at the 
puncture by $$H_i(z)(1 + \eps \int k_i(z))\, .$$ As a deformation of the Atiyah bundle, the 
former consists of considering the mapping $$\nabla\,: \,\mathrm{T}_X(-D_0)\, 
\longrightarrow \, \mathrm{At}_{D_0}(D-D_0)\, ,$$ and taking the induced map on the 
cocycles, i.e. taking $\nabla(v_{\nu_1,\nu_2})$ as a cocycle for 
$$\mathrm{At}_{D_0}(D-D_0)\, ,$$ which, as we are away from the punctures, we can take to 
be a cocycle in for $\mathrm{At}_{D_0}$. To this, we add the element $ k_i(z)$ as a 
cocycle for the deformation of the transition function from the disk around the 
puncture to the Stokes sectors, for the subbundle $\mathrm{ad}(E_G)$ of 
$\mathrm{At}_{D_0}$; the sum of the cocycles gives a class $\gamma$ in 
$\mathrm{H}^1(X,\, \mathrm{At}_{D_0})$.
 
As for the deformation of the curves, an element $\gamma$ of $\mathrm{H}^1(X,\, \mathrm{At}_{D_0} )$ defines an extension
 $$ 0\,\longrightarrow \, \mathrm{At}_{D_0}\, \longrightarrow \,\SA \, \longrightarrow\,\SO_X\,\longrightarrow\, 0\,,$$
 mapping to corresponding extensions of $\mathrm{T}_X(-D_0)$, and so gives a diagram 
$$
 \begin{xy}\xymatrix{
 &\, \,\mathrm{ad}(E_G)\, \,\ar@{=}[r]\ar[d] &\, \,\mathrm{ad}(E_G)\, \,\ar[d] \\ 0\ar[r]& \, \,\mathrm{At}_{D_0}\, \,\ar[d] \ar[r]& \, \,\SA \, \,\ar[r]\ar[d] &\, \, \SO_X\, \,\ar[r]\ar@{=}[d]& \, \, 0 \, \, \\ \, \,0\, \,\ar[r]&\, \,\mathrm{T}_X ( -D_0)\, \,\ar[r]& \, \,\ST\, \, \ar[r] & \, \,\SO_X\, \,\ar[r]& \, \, 0 \, .}\end{xy}$$
 This represents over $\eps\,=\, 0$ the $G$--invariant vector fields on our first order extension, the $\SO_X$--quotient being the normal bundle.

\section{Deformations of reductions}

\subsection{Extending a reduction}

The stability of $G$-bundles concerns reductions to a parabolic subgroup: the bundle 
$E_G$ is stable (respectively, semistable) if for all its reductions $E_P$ to a parabolic subgroup $P$, 
the associated bundle
$$
\text{ad}(E_G)/\text{ad}(E_P)\,=\, E_P({\mathfrak g}/{\mathfrak p})
$$
has positive (respectively, non-negative) degree, where ${\mathfrak g}$ and ${\mathfrak p}$
are the Lie algebras of $G$ and $P$ respectively. If we want to ensure that the set of
non stable bundles is somehow small along the isomonodromic deformation, we must see how
reductions to a parabolic extend along a deformation, and in particular try to understand
the space of first order obstructions to such an extension.

Given a reduction $E_P$, we now have two Atiyah bundles $\mathrm{At}_{D_0}^G$ and $\mathrm{At}_{D_0}^P$
over the surface associated to $E_G$ and $E_P$ respectively. These fit into a diagram:
\begin{equation}\label{e6}
\begin{xy}\xymatrix{
& \, \,0\, \,\ar[d] & \, \,0\, \,\ar[d] \\
\, \,0\, \,\ar[r]&\, \, \text{ad}(E_P) \, \,\ar[r]\ar[d] & \, \,\mathrm{At}_{D_0}^P\, \, \ar[r]^{\beta}\ar[d]^{\xi}&\, \, \mathrm{T}X (-D_0)\, \, \ar[r]\ar@{=}[d] & \, \,0\, \,\\
\, \,0\, \,\ar[r]& \, \,\text{ad}(E_G)\, \, \ar[r] \ar[d]^{\omega_1}& \, \,\mathrm{At}_{D_0}^G\, \,\ar[r]^{\sigma}\ar[d]^{\omega} & \, \,\mathrm{T}X (-D_0) \ar[r]\, \, &\, \, \, 0\, .\\
& \, \,0\, \, & \quad \, \, 0\quad \,}
\end{xy}
\end{equation}

Now assume that the reduction to $P$ extends to first order along a first order deformation of the $G$-bundle over the curve. One then has extensions

\begin{equation} \label{deform-reductions}
\begin{xy}\xymatrix{
 \, \,0\, \,\ar[r] &\, \,\mathrm{At}_{D_0}^P \, \,\ar[r]\ar[d]& \, \,\SA^P \, \,\ar[r]\ar[d] & \, \,\SO_X\, \,\ar[r] \ar@{=}[d]&\, \,0\, \,\\
\, \,0\, \,\ar[r] &\, \,\mathrm{At}_{D_0}^G \, \,\ar[r]& \, \,\SA^G \, \,\ar[r] & \, \,\SO_X\, \,\ar[r]& \, \,0\, .
}
\end{xy}
\end{equation}
given by extension classes $\gamma^P\,\in\, \mathrm{H}^1(X ,\, \mathrm{At}_{D_0}^P)$
and $\gamma^G\,\in\, \mathrm{H}^1(X ,\, \mathrm{At}_{D_0}^G)$. One has the lemma

\begin{lemma}[{\cite[Lemma 3.1]{BHH}}]\label{obstruction}
The above extension classes $\gamma^P, \gamma^G$ are related by $$\gamma^G \,=\,
\xi(\gamma^P)\, .$$ Consequently, there is an obstruction to extending the reductions for
deformations $\gamma^G$ given by $\omega(\gamma^G)\,\in\, \mathrm{H}^1(X,\,
E_P({\mathfrak g}/{\mathfrak p}))$.
\end{lemma}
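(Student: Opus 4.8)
The plan is to read off the claim directly from the commutative diagram structure in (\ref{e6}) combined with the extension diagram (\ref{deform-reductions}). The two statements of the lemma are really one statement followed by its immediate consequence, so I would organize the proof in two short movements: first establish the compatibility $\gamma^G = \xi(\gamma^P)$, then produce the obstruction class by pushing forward along $\omega$.

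For the first part, I would argue that the extension class $\gamma^P \in \mathrm{H}^1(X, \mathrm{At}_{D_0}^P)$ classifying the top row of (\ref{deform-reductions}) maps, under the vertical inclusion $\mathrm{At}_{D_0}^P \hookrightarrow \mathrm{At}_{D_0}^G$ of (\ref{e6}), to the class classifying the bottom row. The key observation is that (\ref{deform-reductions}) is exactly the diagram obtained by functoriality of $\mathrm{Ext}^1(\SO_X, -) = \mathrm{H}^1(X, -)$ applied to the morphism $\xi\colon \mathrm{At}_{D_0}^P \to \mathrm{At}_{D_0}^G$: pushing out the top extension along $\xi$ yields the bottom one because both sit over the \emph{same} quotient $\SO_X$ with the identity map on it (the right-hand vertical arrow in (\ref{deform-reductions}) is an equality). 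Since the induced map on extension classes $\mathrm{H}^1(X, \mathrm{At}_{D_0}^P) \to \mathrm{H}^1(X, \mathrm{At}_{D_0}^G)$ is precisely $\xi$ on cohomology, I conclude $\gamma^G = \xi(\gamma^P)$. Concretely, in \v{C}ech terms, a cocycle representing $\gamma^P$ maps under $\xi$ to a cocycle representing $\gamma^G$, and the compatibility of the Atiyah sequences over the common base $\mathrm{T}X(-D_0)$ guarantees these glue correctly.

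For the second part, the existence of the obstruction follows by composing with the connecting data of the long exact sequence. The left column of (\ref{e6}), namely $0 \to \mathrm{ad}(E_P) \to \mathrm{ad}(E_G) \xrightarrow{\omega_1} E_P(\mathfrak g/\mathfrak p) \to 0$, together with the surjection $\omega\colon \mathrm{At}_{D_0}^G \to E_P(\mathfrak g/\mathfrak p)$ induced on the quotient, gives a map $\omega_*\colon \mathrm{H}^1(X, \mathrm{At}_{D_0}^G) \to \mathrm{H}^1(X, E_P(\mathfrak g/\mathfrak p))$. I would then observe that $\gamma^G$ lifts to a class $\gamma^P$ coming from the $P$-Atiyah bundle precisely when it lies in the image of $\xi$ on cohomology; by exactness, the image of $\gamma^G$ under $\omega$ vanishes if the reduction extends. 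Hence $\omega(\gamma^G) \in \mathrm{H}^1(X, E_P(\mathfrak g/\mathfrak p))$ is the desired obstruction, vanishing exactly when $\gamma^G$ is realizable by a deformation of the reduced bundle $E_P$.

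The main obstacle, such as it is, will be bookkeeping rather than conceptual: one must check that the two Atiyah sequences in (\ref{e6}) share literally the same base sheaf $\mathrm{T}X(-D_0)$ with compatible projections $\beta$ and $\sigma$ (so that $\sigma \circ \xi = \beta$), which is what makes the pushout argument for $\gamma^G = \xi(\gamma^P)$ go through without a correction term. I would verify this by tracing the definitions of $\mathrm{At}_{D_0}^P$ and $\mathrm{At}_{D_0}^G$ as invariant vector fields on $E_P$ and $E_G$ respectively, noting that the natural map $E_P \times^P G \cong E_G$ identifies the horizontal directions. Since the statement is cited from \cite[Lemma 3.1]{BHH}, I would keep the argument brief and refer to that source for the diagram-chase details, emphasizing only that the passage to higher-order poles (the twist by $D-D_0$ in $\mathrm{At}_{D_0}(D-D_0)$) does not affect the formal structure of the argument, since $\omega$ and $\xi$ are defined identically on the relevant subsheaves.
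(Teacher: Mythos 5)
Your proposal is correct and takes essentially the same route as the intended argument: the paper offers no proof of its own here, deferring to \cite[Lemma 3.1]{BHH}, whose proof is precisely your observation that a morphism of extensions which is $\xi$ on the subobjects and the identity on $\SO_X$ exhibits $\SA^G$ as the pushout of $\SA^P$ along $\xi$, so that $\gamma^G=\xi(\gamma^P)$ under $\mathrm{Ext}^1(\SO_X,-)\cong \mathrm{H}^1(X,-)$. The obstruction statement then follows exactly as you say, from exactness of the middle column of \eqref{e6} (with quotient $E_P({\mathfrak g}/{\mathfrak p})$, which is elided in the printed diagram), giving $\omega\circ\xi=0$ and hence $\omega(\gamma^G)=0$ whenever the reduction extends.
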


\subsection{A second fundamental form}

Assume now that there is a connection $\nabla$ on the bundle $E_G$. It does not of course, necessarily preserve the reduction to $E_P$. The failure to preserve $ E_P$ is measured by a second fundamental form: one takes the composition
\begin{equation}\label{sff} S(\nabla) \, = \, \omega\circ \nabla \, : \, \mathrm{T}_X(-D_0) \, \longrightarrow \, \mathrm{At}_{D_0} ( D-D_0) \, \longrightarrow \, E_P({\mathfrak g}/{\mathfrak p})(D-D_0) \, .\end{equation}
The connection preserves the reduction to $P$ if and only if $S(\nabla)\,=\,0$.

Assume that $E_P$ satisfies the condition that
$
S(\nabla)\,\not=\, 0\, .
$
We define some line bundles. Let 
$$
{\mathcal M}({D-D_0})\, \subset\, \mathrm{At}_{D_0}(D-D_0)
$$
be the holomorphic line subbundle generated by the image 
$ \nabla(\mathrm{T}_X(-D_0))$ in \eqref{sff}, and let
$$
{\mathcal L}({D-D_0})\, \subset\, E_P({\mathfrak g}/{\mathfrak p})(D-D_0)
$$
be the holomorphic line subbundle generated by the image 
$ \omega(\nabla(\mathrm{T}_X(-D_0)))$ in \eqref{sff}. More
precisely, ${\mathcal M}_{D-D_0}$ (respectively, ${\mathcal L}_{D-D_0}$) is the inverse
image in $\mathrm{At}_{D_0}(D-D_0)$ (respectively, $E_P({\mathfrak g}/{\mathfrak p})(D-D_0)$)
of the torsion part of the quotient $\mathrm{At}_{D_0}(D-D_0)/\nabla(\mathrm{T}_X(-D_0)$ (respectively,
$E_P({\mathfrak g}/{\mathfrak p})/(\omega\circ\nabla)(\mathrm{T}X(-D_0)))$. Set 
\begin{equation}\label{l}
{\mathcal M}\,=\, {\mathcal M}_{D-D_0}\cap \mathrm{At}_{D_0}\, , \quad \quad {\mathcal L}
\,=\, {\mathcal L}_{D-D_0}\cap E_P({\mathfrak g}/{\mathfrak p})\, .
\end{equation}

We then have the diagram of homomorphisms of line bundles, with the columns being exact:
$$
 \begin{xy}\xymatrix{
\, \, \mathrm{T}_X(-D)\, \,\ar[r]\ar[d]& \, \,{\mathcal M}\, \,\ar[r]\ar[d] &\, \, {\mathcal L}\, \,\ar[d]\\
\, \,\mathrm{T}_X(-D_0)\, \,\ar[r]\ar[d]& \, \,{\mathcal M}(D-D_0)\, \,\ar[r]\ar[d] & \, \,{\mathcal L}(D-D_0)\, \,\ar[d]\\
\, \,Q_1\, \,\ar[d]&\, \,Q_2\, \,\ar[d]&\, \,Q_3\, \,\ar[d]\\ \, \,0\, \,&\, \,0\, \,&\,\, \,0\, .
}\end{xy}
$$

Note that $Q_1$, $Q_2$ and $Q_3$ are isomorphic torsion sheaves supported on $D-D_0$.

\begin{lemma}\label{surj1}
The horizontal homomorphisms in this diagram induce surjective maps on the level of
first cohomology.
\end{lemma}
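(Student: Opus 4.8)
The plan is to observe that every object in the displayed diagram is a line bundle over the smooth, hence integral, curve $X$, and that every horizontal arrow is a \emph{nonzero} homomorphism of such line bundles. The underlying principle I would invoke is elementary: a nonzero homomorphism $A \to B$ of line bundles on an integral curve is automatically injective (its kernel would be a torsion subsheaf of the line bundle $A$, hence zero), and its cokernel $T$ is a torsion sheaf supported on the finite zero locus of the map. Since $\dim \mathrm{Supp}(T) = 0$, one has $\mathrm{H}^1(X,\, T) = 0$, so the tail of the long exact cohomology sequence of $0 \to A \to B \to T \to 0$ gives surjectivity of $\mathrm{H}^1(X,\, A) \to \mathrm{H}^1(X,\, B)$ for free. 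Thus the entire lemma reduces to checking that the four horizontal maps do not vanish.

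The non-vanishing is where the actual work lies. For the middle row, the map $\mathrm{T}_X(-D_0) \to {\mathcal M}(D-D_0)$ is the corestriction of $\nabla$, which is injective because it splits the Atiyah sequence, and the map ${\mathcal M}(D-D_0) \to {\mathcal L}(D-D_0)$ is induced by $\omega$ and is nonzero precisely because $S(\nabla) = \omega \circ \nabla \neq 0$ while ${\mathcal L}(D-D_0)$ is, by construction, generated by the image of $\omega \circ \nabla$ in \eqref{sff}. For the top row I would first confirm that the maps land in the holomorphic subsheaves: a local pole-order count at each $p_i$ shows that a vector field vanishing to order $n_i$, contracted against the order-$n_i$ pole of the connection matrix $h_i$, yields a holomorphic section, so $\nabla$ carries $\mathrm{T}_X(-D)$ into $\mathrm{At}_{D_0}$ and hence into ${\mathcal M} = {\mathcal M}(D-D_0) \cap \mathrm{At}_{D_0}$; likewise $\omega$ sends ${\mathcal M}$ into ${\mathcal L}$. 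Being the restrictions of the (injective) middle-row maps to the nonzero subsheaves $\mathrm{T}_X(-D)$ and ${\mathcal M}$, the top-row maps $\mathrm{T}_X(-D) \to {\mathcal M}$ and ${\mathcal M} \to {\mathcal L}$ are then injective as well, and in particular nonzero.

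With all four arrows shown to be nonzero, each fits in a short exact sequence $0 \to A \to B \to T \to 0$ with torsion $T$, and the cohomological step
$$\mathrm{H}^1(X,\, A) \longrightarrow \mathrm{H}^1(X,\, B) \longrightarrow \mathrm{H}^1(X,\, T) \,=\, 0$$
finishes the argument for each of them. The main obstacle, and the only step requiring genuine care, is the pole-order bookkeeping at the punctures guaranteeing that $\nabla(\mathrm{T}_X(-D))$ and $\omega({\mathcal M})$ really land in the untwisted line bundles ${\mathcal M}$ and ${\mathcal L}$ rather than only in their twists by $D - D_0$; once this local computation is in hand, the vanishing of $\mathrm{H}^1$ for torsion sheaves makes the cohomological conclusion purely formal.
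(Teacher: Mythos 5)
Your proof is correct and follows essentially the same route as the paper, whose entire argument is the observation that the cokernels of the horizontal maps are torsion sheaves, hence have vanishing $\mathrm{H}^1$, so the long exact sequence yields surjectivity. Your additional verifications (that each map is nonzero, and the pole-order count showing $\nabla(\mathrm{T}_X(-D))$ and $\omega({\mathcal M})$ land in the untwisted sheaves ${\mathcal M}$ and ${\mathcal L}$) are correct elaborations of details the paper leaves implicit, not a different method.
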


\begin{proof}
The proof consists in noting that the cokernels of each of these homomorphisms are
torsion sheaves.
\end{proof}

If one considers the homomorphism $\mathrm{T}_X(-D )\,\longrightarrow\, {\mathcal M}
\,\subset\, \mathrm{At}_{D_0}$ given by the 
connection, one has that ${\mathcal M}$ lies in the ``Cartan component" of the bundle to 
order $n_i-1$ at $p_i$, as it is a multiple of $h_i(z)$. For the sheaf $\PP$ of polar 
parts of the connection, let us consider the subsheaf $\PP_\parallel$ whose sections are 
multiples of $h_i(z)$; likewise, in our deformation space $\H^1(X,\,\SC)$, let us consider 
the subspace $\H^1_\parallel(X,\,\SC)$ of classes where the principal part is parallel to 
(i.e. a multiple of) $h_i(z)$.

\begin{proposition}\label{surj2} We have a diagram
$$
\begin{xy}\xymatrix{
\, \,\PP_\parallel\, \,\ar[r]\ar[d]&\, \, \mathrm{H}^0(Q_2)\, \,\ar[d]\\
\, \,\H^1_\parallel(X,\, \SC)\, \,\ar[r]\ar[d]& \, \,\mathrm{H}^1(X,\, {\mathcal M})\, \,\ar[d]\\
\, \,\mathrm{H}^1(X, \, \mathrm{T}_X(-D_0))\, \,\ar[r]&\, \, \,\mathrm{H}^1(X, \, {\mathcal M}(D-D_0))\, .
}\end{xy}
$$
The top horizontal homomorphism is an isomorphism, and the other two horizontal
homomorphisms are surjective.
\end{proposition}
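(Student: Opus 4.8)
The plan is to read the two columns of the displayed diagram as long exact sequences, to realize all three horizontal arrows as coming from a single morphism of two-term complexes induced by $\nabla$, and then to deduce surjectivity of the middle arrow by a diagram chase, once the top arrow is shown to be an isomorphism and the bottom arrow is identified with a map already known to be surjective. First I would identify the columns. The left column is the hypercohomology long exact sequence of the subcomplex $\SC_\parallel = [\mathrm{T}_X(-D_0)\to\PP_\parallel]$: since $F(v)$ is always a multiple of $h_i(z)$, the differential of $\SC$ factors through $\PP_\parallel\subset\PP$, so $\SC_\parallel$ is a genuine subcomplex with $\H^1(\SC_\parallel)=\H^1_\parallel(X,\SC)$, and because $\PP_\parallel$ is a skyscraper sheaf one has $\mathrm{H}^1(X,\PP_\parallel)=0$, producing the exact sequence $\PP_\parallel\to\H^1_\parallel\to\mathrm{H}^1(X,\mathrm{T}_X(-D_0))\to 0$. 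The right column is the long exact sequence of $0\to\mathcal{M}\to\mathcal{M}(D-D_0)\to Q_2\to 0$; as $Q_2$ is torsion, $\mathrm{H}^1(X,Q_2)=0$ and we obtain $\mathrm{H}^0(Q_2)\to\mathrm{H}^1(X,\mathcal{M})\to\mathrm{H}^1(X,\mathcal{M}(D-D_0))\to 0$.

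Next I would produce the horizontal arrows from a single morphism of complexes $\SC_\parallel\to[\mathcal{M}(D-D_0)\to Q_2]$, given by $\nabla$ in degree $0$ and by the polar-part identification $\PP_\parallel\cong Q_2$ in degree $1$. The relevant square commutes because the reduced polar part of $\nabla(v)$ in the $\mathrm{ad}(E_G)$-direction equals $F(v)$, while the projection of $\nabla(v)$ into $Q_2=\mathcal{M}(D-D_0)/\mathcal{M}$ records precisely that polar part. Since the complex $[\mathcal{M}(D-D_0)\to Q_2]$ is quasi-isomorphic to $\mathcal{M}$, this morphism of complexes induces exactly the three arrows of the Proposition and makes both squares of its diagram commute. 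In particular the bottom arrow is the map $\mathrm{H}^1(X,\mathrm{T}_X(-D_0))\to\mathrm{H}^1(X,\mathcal{M}(D-D_0))$ induced by $\nabla$, which is surjective by Lemma \ref{surj1}.

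For the top arrow I would work locally at each $p_i$ in a coordinate $z$. Writing $v=v_1z+v_2z^2+\cdots$, the coefficients of $z^{-(n_i-1)},\ldots,z^{-1}$ in $h_i(z)v(z)$ form a triangular system whose diagonal entries are $h_{i,-n_i}$; this is invertible precisely because $h_{i,-n_i}$ is regular semisimple. Hence $F$ maps $\mathrm{T}_X(-D_0)$ onto $\PP_\parallel$ with kernel $\mathrm{T}_X(-D)$, giving $\dim_{\mathbb C}\PP_\parallel=\sum_i(n_i-1)=\mathrm{length}(Q_2)=\dim_{\mathbb C}\mathrm{H}^0(Q_2)$, and the polar-part identification is injective, so the top arrow is an isomorphism. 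The same local computation shows $\SC_\parallel$ is quasi-isomorphic to $\mathrm{T}_X(-D)$ and identifies the middle arrow with the map $\mathrm{H}^1(X,\mathrm{T}_X(-D))\to\mathrm{H}^1(X,\mathcal{M})$ induced by $\mathrm{T}_X(-D)\to\mathcal{M}$, which already yields its surjectivity from Lemma \ref{surj1}; alternatively the surjectivity follows formally by the four lemma, chasing $y\in\mathrm{H}^1(X,\mathcal{M})$ down to $\mathrm{H}^1(X,\mathcal{M}(D-D_0))$, lifting along the surjective bottom arrow and the surjective left column to some $x\in\H^1_\parallel$, and correcting the difference $y-v(x)$, which lies in the image of $\mathrm{H}^0(Q_2)=\PP_\parallel$.

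I expect the main obstacle to be the bookkeeping underlying the second and third steps: setting up the morphism of complexes so that the identification $\PP_\parallel\cong Q_2$ is canonical and the square genuinely commutes, that is, matching the hypercohomological description of the deformations (the infinitesimal change of the irregular polar part under a reparametrization $v$) with the line-bundle quotient $Q_2=\mathcal{M}(D-D_0)/\mathcal{M}$ cut out by the second fundamental form along the Cartan direction $h_i(z)$. Once this identification, the regular semisimplicity of the leading terms, and the exactness of the two columns are in place, the remaining content is either the already-proved Lemma \ref{surj1} or a purely formal diagram chase.
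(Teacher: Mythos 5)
Your proposal is correct and follows essentially the same route as the paper's proof: both realize the two columns as exact sequences (the hypercohomology sequence of the parallel subcomplex on the left, the torsion sequence $0\to{\mathcal M}\to{\mathcal M}(D-D_0)\to Q_2\to 0$ on the right), check commutativity of the squares by matching the polar-part contribution $k_i$ with its image in $Q_2$, identify $\PP_\parallel$ with $\mathrm{H}^0(Q_2)$, get the bottom surjectivity from Lemma \ref{surj1}, and conclude the middle surjectivity by the same top-iso-plus-bottom-surjective chase. Your additions---the explicit triangular computation showing $F$ maps $\mathrm{T}_X(-D_0)$ onto $\PP_\parallel$ with kernel $\mathrm{T}_X(-D)$, hence the quasi-isomorphism of the parallel subcomplex with $\mathrm{T}_X(-D)$ yielding a second, chase-free proof of the middle surjectivity via Lemma \ref{surj1}---are correct refinements of points the paper asserts without detail.
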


\begin{proof}
On the top row, the components of $\PP_\parallel$ are exactly those of the torsion sheaf 
$Q$. On the bottom row, one has elements $\beta$ of $\mathrm{H}^1(X,\, \mathrm{T}_X(-D_0))$ mapped by 
$\nabla$ to $\mathrm{H}^1(X,\,{\mathcal M}(D-D_0))$. As argued in Lemma \ref{surj1}, this map is 
surjective.

One now wants to see that the top and bottom fit together correctly in the middle term. Let 
$\widehat\beta\,\in\, \H^1_\parallel(X,\,\SC)$ be represented by elements $k_i$ of 
$\PP_\parallel$ at each puncture, and a representative cocycle $\beta$ of $\mathrm{H}^1(X, \, \mathrm{T}_X(-D_0))$. If one turns $k_i$ in the natural way into a cocycle supported on a 
punctured disk $\Delta_i$ at $p_i$, it gives precisely the element of $\mathrm{H}^1(X,\,{\mathcal M})$ 
which is the coboundary of $k_i$ thought of as an element of $Q_2$. In turn, the cocycle 
$\beta$ is simply mapped to $\mathrm{H}^1(X,\,{\mathcal M})$ by the sheaf map; the total map from 
$\H^1_\parallel(X,\,\SC)$ is given by the sum of these two contributions, as in the 
definition of $\SA$ above. Since the top map is an isomorphism, and the bottom one is 
surjective, the middle map is also surjective.
\end{proof}

We now have a surjective map $\H^1_\parallel(X,\, \SC)\, \longrightarrow\,
\mathrm{H}^1(X,\, {\mathcal M})$, which, when mapped on to $\mathrm{H}^1(X,\, \mathrm{At}_{X_0})$,
defines the extension $\SA$. We saw in turn that the map $$\omega\,:\,
\mathrm{H}^1(X,\, \mathrm{At}_{X_0})\,\longrightarrow \,
\mathrm{H}^1(X,\, E_P({\mathfrak g}/{\mathfrak p}))$$ gave an obstruction to extending to first order a reduction to $P$. We have a diagram:
\begin{equation}
\begin{xy}\xymatrix{
\, \,\H^1_\parallel(X,\, \SC)\, \,\ar[r]& \, \,\mathrm{H}^1(X,\, {\mathcal M})\, \,\ar[r]^{\omega_{\mathcal L}}\ar[d]&\, \, \mathrm{H}^1(X,\, {\mathcal L})\, \,\ar[d]\\
&\, \,\mathrm{H}^1(X,\, \mathrm{At}_{X_0})\ar[r]^{\omega}\, \, &\, \,\, \mathrm{H}^1(X,\, E_P({\mathfrak g}/{\mathfrak p}))\, .
}\end{xy}
\end{equation}
 
We have, as in Proposition 4.3 of \cite{BHH}:

\begin{proposition}\label{prop1}
Let $\widehat\beta \in \H^1_\parallel(X,\, \SC)$ represent an isomonodromy deformation class, of a
connection with non-vanishing second fundamental form, yielding a class
$$\gamma_\SM\, \in \, \mathrm{H}^1(X,\, {\mathcal M})\, ,$$ and a class $$\gamma \,\in\,
\mathrm{H}^1(X,\, \mathrm{At}_{X_0})$$ representing the extension $\SA$. 
The obstruction $\omega (\gamma)$ to extending a reduction to $P$ factors through $\SL$, as $\omega_\SL(\gamma_\SM)\,\in\, \mathrm{H}^1(X,\, \SL)$, and if the bundle reduces to $P$ then $\omega_\SL(\gamma_\SM)\,\in\, \mathrm{H}^1(X,\, \SL)$ is also zero. 
\end{proposition}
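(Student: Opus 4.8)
The plan is to establish Proposition \ref{prop1} by tracking the obstruction class $\omega(\gamma)$ through the factorization encoded in the two commuting diagrams just constructed, showing that it lands in the image of $\mathrm{H}^1(X,\,\SL)\to\mathrm{H}^1(X,\,E_P(\mathfrak g/\mathfrak p))$. The central observation is that the extension class $\gamma\in\mathrm{H}^1(X,\,\mathrm{At}_{X_0})$ arising from the isomonodromy deformation $\widehat\beta$ is not an arbitrary element: by the construction of $\SA$ described at the end of Section 3 and reinforced by Proposition \ref{surj2}, it is the image of $\gamma_\SM\in\mathrm{H}^1(X,\,{\mathcal M})$ under the natural map induced by the line-subbundle inclusion ${\mathcal M}\hookrightarrow\mathrm{At}_{D_0}$. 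So I would first record that $\gamma$ is pulled back from $\gamma_\SM$, i.e. that the left triangle of the final diagram commutes with $\gamma_\SM\mapsto\gamma$.

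\emph{Next} I would exploit the fact that the square in the final diagram commutes, so that
\[
\omega(\gamma)\,=\,\omega\bigl(\iota(\gamma_\SM)\bigr)\,=\,j\bigl(\omega_\SL(\gamma_\SM)\bigr),
\]
where $\iota_*:\mathrm{H}^1(X,\,{\mathcal M})\to\mathrm{H}^1(X,\,\mathrm{At}_{X_0})$ and $j:\mathrm{H}^1(X,\,\SL)\to\mathrm{H}^1(X,\,E_P(\mathfrak g/\mathfrak p))$ are the maps induced by the line-subbundle inclusions. The content of the commuting square is precisely that $\omega$ restricted to the image of ${\mathcal M}$ factors through $\omega_\SL$ composed with the inclusion $\SL\hookrightarrow E_P(\mathfrak g/\mathfrak p)$; this is where one uses that ${\mathcal M}$ is the line bundle generated by $\nabla(\mathrm{T}_X(-D_0))$ and that $\omega$ carries it into the line bundle $\SL$ generated by $\omega(\nabla(\mathrm{T}_X(-D_0)))$, by the very definitions in \eqref{l}. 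This gives the asserted factorization: the obstruction $\omega(\gamma)$ is the image of $\omega_\SL(\gamma_\SM)\in\mathrm{H}^1(X,\,\SL)$.

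\emph{Finally}, for the converse-flavored last clause, I would argue that if the bundle reduces to $P$ over the deformed curve—equivalently, if the reduction $E_P$ extends to first order along $\widehat\beta$—then by Lemma \ref{obstruction} the obstruction $\omega(\gamma)$ vanishes in $\mathrm{H}^1(X,\,E_P(\mathfrak g/\mathfrak p))$. To conclude that $\omega_\SL(\gamma_\SM)$ itself vanishes in $\mathrm{H}^1(X,\,\SL)$, and not merely its image, I would check that the map $j:\mathrm{H}^1(X,\,\SL)\to\mathrm{H}^1(X,\,E_P(\mathfrak g/\mathfrak p))$ is injective on the relevant class. This follows because the quotient $E_P(\mathfrak g/\mathfrak p)/\SL$ is torsion-free away from $D-D_0$ by the construction of $\SL$ as the saturation of the image sheaf, so the connecting map into $\mathrm{H}^0$ of a torsion quotient cannot kill $\omega_\SL(\gamma_\SM)$; the argument is the same saturation/torsion bookkeeping that underlies Lemma \ref{surj1}.

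\textbf{The main obstacle} I expect is the last step: verifying that $j$ does not introduce a spurious kernel, i.e. that vanishing of the full obstruction in $\mathrm{H}^1(X,\,E_P(\mathfrak g/\mathfrak p))$ really forces vanishing of the refined class in $\mathrm{H}^1(X,\,\SL)$. This is delicate precisely because $\SL$ is a line subbundle cut out by a saturation/torsion condition, so one must control the cokernel $E_P(\mathfrak g/\mathfrak p)/\SL$ and show its $\mathrm{H}^0$ does not interfere—essentially the irregular analogue of the torsion-sheaf argument in \cite[Proposition 4.3]{BHH}, adapted to account for the pole orders along $D-D_0$ introduced by the higher-order singularities.
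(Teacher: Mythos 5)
Your first two steps are sound and agree with the paper's setup: since $\gamma$ is the image of $\gamma_\SM$ under $\mathrm{H}^1(X,\SM)\to \mathrm{H}^1(X,\mathrm{At}_{D_0})$, and since the saturation definitions in \eqref{l} force $\omega(\SM)\subseteq\SL$, the square commutes and $\omega(\gamma)=j\bigl(\omega_\SL(\gamma_\SM)\bigr)$. The gap is exactly where you predicted it, and your proposed fix fails: the map $j\colon \mathrm{H}^1(X,\SL)\to \mathrm{H}^1(X,E_P(\mathfrak g/\mathfrak p))$ is \emph{not} injective in general, and no torsion argument can make it so. Because $\SL$ is saturated, the quotient $Q':=E_P(\mathfrak g/\mathfrak p)/\SL$ is not a torsion sheaf but a locally free sheaf of rank $\dim(\mathfrak g/\mathfrak p)-1$, and $\ker j$ is the image of the connecting homomorphism $\mathrm{H}^0(X,Q')\to \mathrm{H}^1(X,\SL)$. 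Nothing makes $\mathrm{H}^0(X,Q')$ vanish: $Q'$ is a \emph{quotient}, not a subsheaf, of $E_P(\mathfrak g/\mathfrak p)$, so even the negativity $\mu_{\rm max}(E_P(\mathfrak g/\mathfrak p))<0$ used later in the Harder--Narasimhan application does not exclude global sections (already $\SO(-1)^{\oplus 2}$ on $\mathbb{P}^1$ surjects onto $\SO$ with kernel $\SL=\SO(-2)$, and there $\ker j=\mathrm{H}^1(\SL)$ is everything). The torsion sheaves $Q_1,Q_2,Q_3$ of Lemma \ref{surj1} are supported on $D-D_0$ and compare $\SL$ with $\SL(D-D_0)$; they say nothing about the cokernel of $\SL\hookrightarrow E_P(\mathfrak g/\mathfrak p)$. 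So from Lemma \ref{obstruction} plus your commuting square you only recover $\omega_\SL(\gamma_\SM)\in\ker j$, i.e.\ the weak statement you already had; the refined vanishing in $\mathrm{H}^1(X,\SL)$ does not follow by this route.

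The paper's proof (the one-line sketch after Proposition \ref{prop1}, referring to Proposition 4.3 of \cite{BHH}) avoids $j$ entirely by working \emph{above} $\SL$ rather than pushing into $E_P(\mathfrak g/\mathfrak p)$ and trying to come back. Since $\omega(\SM)\subseteq\SL$, the class $\gamma_\SM$ naturally lives in $\mathrm{H}^1$ of the restricted Atiyah bundle $\omega^{-1}(\SL)\subset \mathrm{At}^G_{D_0}$, which sits in the sequence $0\to \mathrm{At}^P_{D_0}\to \omega^{-1}(\SL)\to\SL\to 0$, and $\omega_\SL(\gamma_\SM)$ is its image in $\mathrm{H}^1(X,\SL)$. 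When the reduction extends to first order, one has $\SA^P\subset\SA^G$ as in \eqref{deform-reductions} with $\SA^G/\SA^P\cong E_P(\mathfrak g/\mathfrak p)$, and one restricts the extension above $\SL$ as well, taking the preimage $\widetilde\SA\subset\SA^G$ of $\SL$; the quotient $\widetilde\SA/\mathrm{At}^P_{D_0}$ is then an extension of $\SO_X$ by $\SL$ computing the obstruction class, and it is \emph{split} by the subsheaf $\SA^P/\mathrm{At}^P_{D_0}\cong\SO_X$ (here one uses that the isomonodromy cocycle defining $\SA$ takes values in $\SM\subset\omega^{-1}(\SL)$, so the geometric class really is $\omega_\SL(\gamma_\SM)$). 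This exhibits the vanishing directly in $\mathrm{H}^1(X,\SL)$, which is the refined statement needed for the codimension count in Theorem \ref{prop2}; your version, which only controls the image in $\mathrm{H}^1(X,E_P(\mathfrak g/\mathfrak p))$, would weaken that count by $\dim\ker j$ and break the proof of the main theorem.
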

The proof in essence works by taking the restriction of the Atiyah bundle and its extension which live above $\SL$.

We will want to estimate the dimension of the space spanned by the obstructions $\omega_L(\gamma_\SM)$, as this will give a bound on the codimensions of the stable locus, as explained in the next section.

\section{Harder-Narasimhan filtrations} 
 
Let as before $ \mathrm{Teich}_{\mathfrak h, g,m}$ be our Teichm\"uller space; over it we 
have, locally at least, a universal family $(\mathcal{C},\, \mathcal{D},\, \mathcal{H})$ 
whose fiber at $q$ is a curve $\mathcal{C}(q)$, a divisor $$\mathcal{D}(q)\,=\, 
\sum_in_ip_i(q)$$ and a collection of formal solutions $H_i(q)$. Over this in turn the 
isomonodromy process, described in section 3, gives a $G-$bundle $\SE_G\,\longrightarrow\,\SC$, equipped with a flat 
connection, with the appropriate polar behavior at $\SD$. For $\SE_G$, one has a 
Harder-Narasimhan filtration for families of $G$-bundles, as propounded in \cite{Nitsure} 
(see also \cite{Sh}); the filtration is trivial if and only if the bundle is semi-stable.

\begin{lemma}[\cite{Nitsure}] \label{lemNitsure}
Let $\mathcal{E}_G\,\longrightarrow\, \mathcal{C}\,\longrightarrow\,\mathcal{T}_{\mathfrak h,g,m}$
be as above. For each Harder--Narasimhan type $\kappa$, the set
$$
\mathcal{Y}_\kappa \,:=\, \{t\,\in\, \mathcal{T}_{\mathfrak h,g,m} ~\mid ~
\mathcal{E}_G\vert_{{\mathcal C}_t}\ \text{ is\ of\ type }\ \kappa\}
$$
is a (possibly empty) locally closed complex analytic subspace of $\mathcal{T}_{\mathfrak h,g,m}$.
More precisely, for each Harder--Narasimhan type $\kappa$, the union 
$\mathcal{Y}_{\leq \kappa}\,:=\,\bigcup_{\kappa'\leq \kappa}\mathcal{Y}_{\kappa'}$ is
a closed complex analytic subset of $\mathcal{T}_{\mathfrak h,g,m}$.
Moreover, the principal $G$--bundle
$$
\mathcal{E}_G\vert_{\tau^{-1}(\mathcal{Y}_\kappa)}\,\longrightarrow\,
\tau^{-1}(\mathcal{Y}_\kappa)
$$
possesses a canonical holomorphic 
reduction of structure group inducing the Harder--Narasimhan
reduction of $\mathcal{E}_G\vert_{{\mathcal C}_t}$ for every $t\,\in \,\mathcal{Y}_\kappa$.
\end{lemma}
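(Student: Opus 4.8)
The plan is to recognize this as the principal-bundle analogue of the Shatz stratification theorem, established in the stated generality by Nitsure (building on Atiyah--Bott, Ramanan--Ramanathan, and Behrend); I would reconstruct the argument in three movements. First I would record the pointwise statement on which everything rests: over a single curve $\mathcal{C}_t$, the $G$--bundle $\mathcal{E}_G\vert_{\mathcal{C}_t}$ admits a \emph{unique} canonical reduction to a parabolic subgroup $P\subseteq G$, characterized by the conditions that the associated Levi quotient bundle be semistable and that the characters of $P$ occurring in $\mathfrak g/\mathfrak p$ pair positively with the reduction (Behrend's theorem). The discrete invariant of this reduction --- the conjugacy class of $P$ together with the degrees of the associated line bundles, equivalently the Harder--Narasimhan vector in the rational positive Weyl chamber --- is exactly the type $\kappa$, and the reduction is trivial (equal to $G$) precisely when the bundle is semistable. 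This pins down the set-theoretic stratification $t\mapsto \kappa(t)$; it remains to control its variation in the family and to globalize the reduction over each stratum.

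Second, I would establish the topological part. With the partial order on types appearing in the statement, the heart of the matter is the semicontinuity theorem asserting that $\mathcal{Y}_{\leq\kappa}$ is closed: for a fixed parabolic $P$ and a relative reduction, the instability datum is computed from the degrees of relative bundles $E_P(\mathfrak g/\mathfrak p)$, whose cohomology is governed by Grauert's semicontinuity for the proper map $\tau$, forcing the Harder--Narasimhan invariant to jump up only along analytic closed subsets. Because the topological type of $\mathcal{E}_G\vert_{\mathcal{C}_t}$ is locally constant on the connected base, boundedness confines the occurring $\kappa$ to a finite poset, so each stratum $\mathcal{Y}_\kappa = \mathcal{Y}_{\leq\kappa}\setminus\bigcup_{\kappa'<\kappa}\mathcal{Y}_{\leq\kappa'}$ is a locally closed complex analytic subspace.

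Third, over a single stratum $\mathcal{Y}_\kappa$ I would construct the asserted relative reduction. Fiberwise, the canonical reduction is cut out inside the associated $G/P$--bundle by the semistability-plus-positivity conditions; over the locus where $\kappa$ is constant these conditions single out a flat family of destabilizing subobjects, via relative Quot-scheme and relative flag-bundle arguments, so that the fiberwise reductions assemble into a single holomorphic section of the $G/P$--bundle over $\tau^{-1}(\mathcal{Y}_\kappa)$, i.e. a reduction of structure group. Uniqueness of the pointwise reduction guarantees that locally constructed sections agree on overlaps, hence glue to the desired canonical reduction.

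The main obstacle is this third step --- the holomorphic, not merely fiberwise, existence of the canonical reduction together with the analyticity of the stratum structure. This is where Nitsure's flattening-stratification machinery is essential: one must know that over the constant-type locus the destabilizing subsheaves form a subbundle in families rather than just fiberwise, and one must transport his schematic construction into the analytic category over the base $\mathrm{Teich}_{\mathfrak h,g,m}$, which is legitimate since $\tau$ is proper and the relevant Quot and Hom functors are representable analytically. By contrast the semicontinuity and the pointwise uniqueness of Behrend's reduction are comparatively formal.
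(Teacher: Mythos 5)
The paper offers no proof of this lemma at all: it is imported verbatim from Gurjar--Nitsure \cite{Nitsure} (with the analytic transfer over the Teichm\"uller base left implicit, as in \cite{BHH}), and your three-movement sketch is a faithful reconstruction of exactly that machinery --- Behrend's unique canonical reduction, Shatz-type semicontinuity and finiteness of the occurring types, and the relative reduction over each stratum --- including the correct identification of the schematic-to-analytic transport as the point needing care. The only slight looseness is attributing the closedness of $\mathcal{Y}_{\leq\kappa}$ directly to Grauert semicontinuity of degrees (the jump behaviour is really controlled by properness of relative flag/Quot constructions together with semicontinuity of $h^0$ of twists), but as a summary of the cited result this is immaterial.
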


It is our aim to show that all of these strata except that corresponding to the trivial filtration (and hence to semi-stable bundles) are of codimension $g$, by showing that there is a $g$-dimensional family of directions for which the reduction does not extend. 

\begin{theorem}\label{prop2} 
Assume
that the monodromy representation for $\nabla_0$ is irreducible in the sense that it does not factor through some proper
parabolic subgroup of $G$. 
\begin{enumerate}

\item If $g\,\geq\, 1$, then there is a
closed complex analytic subset $\mathcal{Y}\, \subset\,
\mathrm{Teich}_{\mathfrak h, g,m}$ of codimension at least $g$ such that for any $t\,\in\,
\mathrm{Teich}_{\mathfrak h, g,m} \setminus \mathcal{Y}$, the holomorphic principal $G$--bundle
$\mathcal{E}^t_G$ is semi-stable.

\item If $g\,\geq\, 2$, then there
is a closed complex analytic subset $\mathcal{Y}'\, \subset\,\mathrm{Teich}_{\mathfrak h, g,m}$ of
codimension at least $g-1$ such that for any $t\,\in\, \mathrm{Teich}_{\mathfrak h, g,m}\setminus
\mathcal{Y}'$, the holomorphic principal $G$--bundle $\mathcal{E}^t_G$ is stable.
\end{enumerate}

\end{theorem}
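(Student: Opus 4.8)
The plan is to reduce Theorem \ref{prop2} to a dimension count on the obstruction classes produced in Proposition \ref{prop1}, using the stratification of Lemma \ref{lemNitsure}. Fix a Harder--Narasimhan type $\kappa$ corresponding to a nontrivial reduction, so that over the stratum $\mathcal{Y}_\kappa$ the bundle $\mathcal{E}_G$ admits a canonical reduction to a parabolic $P$. The key point is that the tangent space to $\mathcal{Y}_\kappa$ inside $\mathrm{Teich}_{\mathfrak h, g,m}$ is annihilated by the obstruction map: a deformation direction $\widehat\beta$ stays within $\mathcal{Y}_\kappa$ only if the reduction to $P$ extends to first order, which by Lemma \ref{obstruction} and Proposition \ref{prop1} forces $\omega_\SL(\gamma_\SM)\,=\,0$ in $\mathrm{H}^1(X,\,\SL)$. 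Hence the codimension of $\mathcal{Y}_\kappa$ is bounded below by the rank of the composite map $\H^1_\parallel(X,\,\SC)\,\longrightarrow\,\mathrm{H}^1(X,\,\SL)$, i.e. by the dimension of the span of the obstruction classes $\omega_\SL(\gamma_\SM)$ as $\widehat\beta$ ranges over $\H^1_\parallel(X,\,\SC)$.

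The heart of the argument is therefore to show that this span has dimension at least $g$ (for the semistable statement) and at least $g-1$ (for the stable statement). First I would use Proposition \ref{surj2} to replace $\H^1_\parallel(X,\,\SC)$ by $\mathrm{H}^1(X,\,{\mathcal M})$, since the map between them is surjective; thus it suffices to bound the rank of $\omega_\SL\,:\,\mathrm{H}^1(X,\,{\mathcal M})\,\longrightarrow\,\mathrm{H}^1(X,\,\SL)$ from below. Next I would compute the degrees of the line bundles $\mathcal{M}$ and $\mathcal{L}$: because $\mathcal{M}$ contains the image of $\mathrm{T}_X(-D)$ under the connection and $\mathcal{L}$ is its image in $E_P(\mathfrak{g}/\mathfrak{p})$, one can estimate $\deg\SL$ using the nonvanishing of the second fundamental form $S(\nabla)$ together with the fact that the reduction $E_P$ has nonpositive degree contribution (the Harder--Narasimhan reduction destabilizes). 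By Riemann--Roch and Serre duality on the genus-$g$ curve, a line bundle $\SL$ of suitably controlled degree has $\dim\mathrm{H}^1(X,\,\SL)\,\geq\, g$ (respectively $g-1$), and the surjectivity of $\omega_\SL$ onto the relevant piece — which follows by chasing the commutative square relating $\omega_\SL$ to $\omega$ as in Proposition \ref{prop1} — transfers this lower bound to the rank of the obstruction map.

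The main obstacle I expect is controlling the interplay between the degree estimate for $\SL$ and the irreducibility hypothesis on the monodromy, which is what rules out the connection preserving the reduction (i.e. guarantees $S(\nabla)\,\not=\,0$ and hence that $\SL$ is genuinely a line bundle of the expected degree). Irreducibility must be used to ensure that for \emph{every} nontrivial parabolic reduction arising as a Harder--Narasimhan type, the second fundamental form is nonzero, so the obstruction analysis applies uniformly across all strata $\kappa$; otherwise a reduction preserved by the flat connection would persist under isomonodromic deformation and the bundle could stay unstable. The delicate bookkeeping is then to sum the individual codimension bounds over the finitely many (or locally finite) Harder--Narasimhan types: one sets $\mathcal{Y}$ (respectively $\mathcal{Y}'$) to be the union of the closures $\mathcal{Y}_{\leq\kappa}$ over all nontrivial $\kappa$, which is a closed analytic subset by Lemma \ref{lemNitsure}, and the codimension bound on each stratum gives the asserted global codimension at least $g$ (respectively $g-1$). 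The semistable and stable cases are distinguished precisely by whether one allows $\deg E_P(\mathfrak{g}/\mathfrak{p})\,=\,0$, which costs one in the degree of $\SL$ and hence one in the cohomological dimension count, explaining the shift from $g$ to $g-1$.
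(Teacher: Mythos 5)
Your proposal follows essentially the same route as the paper's own proof: the Harder--Narasimhan stratification of Lemma \ref{lemNitsure}, irreducibility forcing $S(\nabla)\,\neq\,0$ so that $\SL$ exists, negativity of $\mathrm{degree}(\SL)$ inherited from the destabilizing reduction (via $\mu_{\rm max}(E_P({\mathfrak g}/{\mathfrak p}))\,<\,0$) giving $\mathrm{h}^1(X,\,\SL)\,\geq\, g$, and the surjectivity of $\H^1_\parallel(X,\,\SC)\,\longrightarrow\,\mathrm{H}^1(X,\,{\mathcal M})\,\longrightarrow\,\mathrm{H}^1(X,\,\SL)$ from Proposition \ref{surj2} and Lemma \ref{surj1} to bound the codimension of each stratum by the rank of the obstruction map. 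Your account of the semistable versus stable dichotomy (strict negativity of $\deg\SL$ versus $\deg\SL\,\leq\,0$, costing one in the $\mathrm{h}^1$ count and explaining the shift from $g$ to $g-1$) is exactly the paper's closing argument, so the proposal is correct and matches the paper in all essentials.
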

 
\begin{proof}
Let $g\, >\, 1$. Let ${\mathcal Y}\, \subset\, {\mathcal T}_{{\mathfrak h}, g,m}$ denote the (finite) union of all 
Harder-Narasimhan strata ${\mathcal Y}_\kappa$ as in Lemma \ref{lemNitsure} with 
non-trivial Harder-Narasimhan type $\kappa$. From Lemma \ref{lemNitsure} we know
that ${\mathcal Y}$ is a closed complex analytic subset of ${\mathcal T}_{{\mathfrak h}, g,m}$.

Take any $t\, \in\, \mathcal{Y}_\kappa\, \subset\, {\mathcal Y}$. Let $E_G\,=\,
\mathcal{E}_G\vert_{{\mathcal C}_t}$ be the holomorphic
principal $G$--bundle on $$X\,:=\, {\mathcal C}_t\, .$$ The holomorphic
connection on $E_G$ obtained by restricting the universal isomonodromy connection
 will be denoted by $\nabla$. Since $E_G$ is not semistable,
there is a proper parabolic subgroup $P\, \subsetneq\, G$ and a holomorphic
reduction of structure group $E_P\, \subset\, E_G$ to $P$, such that $E_P$ is the
Harder--Narasimhan reduction \cite{Be}, \cite{AAB}; the type of this
Harder--Narasimhan reduction is $\kappa$. From Lemma \ref{lemNitsure} we
know that $E_P$ extends along its stratum to a holomorphic reduction of structure group of the principal
$G$--bundle $\mathcal{E}_G\vert_{\tau^{-1}(\mathcal{Y}_\kappa)}$ to the subgroup $P$.

Let $\mu_{\rm max}$ be the maximal slope (degree/rank) of the terms of the Harder 
Narasimhan-filtration.

We have
\begin{equation}\label{deg2}
\mu_{\rm max}(E_P({\mathfrak g}/{\mathfrak p}))\, <\, 0
\end{equation}
\cite[p.~705]{AAB}. In particular
\begin{equation}\label{deg3}
\text{degree}(E_P({\mathfrak g}/{\mathfrak p})) \, <\, 0\, .
\end{equation}

Form the irreducibility of the connection, we know that the second fundamental form $S(\nabla)$ does not vanish, and so we can build the line bundle $\SL$ as above in \eqref{l}.
$$
{\mathcal L}\, \subset\, E_P({\mathfrak g}/{\mathfrak p}).
$$
{}From \eqref{deg2} we have
\begin{equation}\label{deg}
\text{degree}({\mathcal L}) \, <\, 0\, .
\end{equation}
Therefore, $\mathrm{h}^0(X,\,\SL) =0$, and $\mathrm{h}^1(X,\,\SL)\geq g$.

On the other hand, Lemma \ref{surj1} and Lemma \ref{surj2} gave a surjective map from our deformation space $\H^1_\parallel(X,\, \SC)$ to our obstruction space $\mathrm{H}^1(X,\,\SL)$. The space ${\mathcal Y}$ is thus of codimension at least $g$.

For the second case, when stability fails, the degree of $\SL$ is only less than or equal to zero, and so $\mathrm{h}^1(X,\,\SL)\geq g-1$, giving the announced codimension.
\end{proof}



\begin{thebibliography}{ZZZZ}

\bibitem[AAB]{AAB}B. Anchouche, H. Azad and I. Biswas, Harder-Narasimhan
reduction for principal bundles over a compact K\"ahler manifold, \textit{Math.
Ann.} \textbf{323} (2002), 693--712.

\bibitem[AB]{Bolibruch1} D. Anosov and A. Bolibruch, { The Riemann-Hilbert 
problem}, {\it Aspects of Mathematics}, E22. Friedr. Vieweg \& Sohn,
Braunschweig, 1994.

\bibitem[At]{At} M. F. Atiyah, Complex analytic connections in fibre
bundles, \textit{Trans. Amer. Math. Soc.} \textbf{85} (1957), 181--207.

\bibitem[Be]{Be} K. A. Behrend, Semistability of reductive group schemes over
curves, \textit{Math. Ann.} \textbf{301} (1995), 281--305.

\bibitem[BHH]{BHH} I. Biswas, V. Heu, J. Hurtubise, Isomonodromic deformations of 
logarithmic connections and stability, \textit{Math. Ann.}
DOI 10.1007/s00208-015-1318-5.

\bibitem[Boa1]{Boa1} P. Boalch, $G$-bundles, isomonodromy and quantum
Weyl groups, \textit{Int. Math. Res. Not.} \textbf{22} (2002), 1129--1166

\bibitem[Boa2]{Boa2} P. Boalch, Quasi-Hamiltonian geometry of meromorphic
connections, \textit{Duke Math. Jour.} \textbf{139} (2007), 369--405. 

\bibitem[Boa3]{Boa3} P. Boalch, Geometry and braiding of Stokes data; fission and
wild character varieties, \textit{Ann. Math.} {\bf 179} (2014), 301--365.

\bibitem[Bol1]{Bolibruch2} A. Bolibruch, On sufficient conditions for the 
positive solvability of the Riemann-Hilbert problem, \textit{Math. Notes 
Acad. Sci. USSR} \textbf{51} (1992), 110--117.

\bibitem[Bol2]{Bolibruch3} A. Bolibruch, The Riemann-Hilbert problem, 
\textit{Russian Math. Surveys} \textbf{45} (1990), 1--58.

\bibitem[De]{Dekkers} W. Dekkers, The matrix of a connection having 
regular singularities on a vector bundle of rank 2 on $\mathbb{P}^1 
(\mathbb{C})$, {\it \'Equations diff\'erentielles et syst\`emes de
Pfaff dans le champ complexe} (Sem., Inst. Rech. Math. Avanc\'ee,
Strasbourg, 1975), pp. 33--43, Lecture Notes in Math., 712,
Springer, Berlin, 1979.

\bibitem[EH]{Helene1} H. Esnault and C. Hertling, Semistable bundles and 
reducible representations of the fundamental group, \textit{Int. 
Jour. Math.} \textbf{12} (2001), 847--855.

\bibitem[EV]{Helene2} H. Esnault and E. Viehweg, Semistable bundles on 
curves and irreducible representations of the fundamental group,
{\it Algebraic geometry: Hirzebruch 70} (Warsaw, 1998), 129--138,
Contemp. Math., 241, Amer. Math. Soc., Providence, RI, 1999.

\bibitem[GN]{Nitsure} S. R. Gurjar and N. Nitsure, Schematic Harder-Narasimhan
stratification for families of principal bundles and lambda modules,
{\it Proc. Ind. Acad. Sci. (Math. Sci.)} {\bf 124} (2014), 315--332.

\bibitem[He1]{Viktoria2} V. Heu, Universal isomonodromic deformations of meromorphic rank 2 connections on curves. \textit{Ann. Inst. Fourier (Grenoble)} \textbf{60} (2010), no. 2, 515--549. 

\bibitem[He2]{Viktoria1} V. Heu, Stability of rank $2$ vector bundles 
along isomonodromic deformations, \textit{Math. Ann.} \textbf{60}
(2010), 515--549.


\bibitem[JMU]{JMU} M. Jimbo, T. Miwa and K. Ueno, Monodromy preserving deformation of linear 
ordinary differential equations with rational coefficients. I, \textit{Phys. D} \textbf{2} 
(1981), 306--352.

\bibitem[Ko]{Kostov} V. Kostov, Fuchsian linear systems on $\mathbb{CP}^1$ 
and the Riemann-Hilbert problem, \textit{Com. Ren. Acad. Sci. Paris}
\textbf{315} (1992), 143--148.
 
\bibitem[Ma]{Ma} B. Malgrange, Sur les d\'eformations isomonodromiques I, II,
 Mathematics and physics (Paris, 1979/1982), 427--438, \textit{Progr. Math.}, \textbf{37}, Birkh\"auser Boston, Boston, MA, 1983.

\bibitem[Pl]{Plemelj} J. Plemelj, {Problems in the sense of Riemann and
Klein}, {\it Interscience Tracts in Pure and Applied Mathematics}, {\bf 16},
Interscience Publishers John Wiley \& Sons Inc., New York-London-Sydney, 1964.

\bibitem[Sa]{Sa} C. Sabbah, D\'eformations isomonodromiques et vari\'et\'es de Frobenius 
(French), {\it Savoirs Actuels, Math\'ematiques}, EDP Sciences, Les Ulis; CNRS \'Editions, Paris, 
2002. xvi+289 pp.

\bibitem[Sh]{Sh} S. S. Shatz, The decomposition and specialization of algebraic
families of vector bundles, {\it Compositio Math.} {\bf 35} (1977), 163--187.
\end{thebibliography}
 \end{document}